\documentclass{amsart}
\usepackage{relsize}

\newcommand{\gopr}{\gnecessary}

\DeclareSymbolFont{extraup}{U}{zavm}{m}{n}
\DeclareMathSymbol{\vardiamond}{\mathalpha}{extraup}{87}
 \DeclareSymbolFont{symbolsC}{U}{txsyc}{m}{n}
\DeclareMathSymbol{\strictif}{\mathrel}{symbolsC}{74}
\DeclareMathSymbol{\strictfi}{\mathrel}{symbolsC}{75}
\DeclareMathSymbol{\strictiff}{\mathrel}{symbolsC}{76}
\newcommand{\tto}{\strictif}

\renewcommand{\descriptionlabel}[1]%
{\hspace{\labelsep}\emph{#1:}}

\usepackage{amssymb}
\usepackage{latexsym}
\usepackage{amsmath}
\usepackage{enumerate}
\usepackage{amsthm}
\usepackage{wasysym}
\usepackage{stmaryrd}
\usepackage{mathrsfs}
\usepackage{pifont}
\usepackage{tikz}
\usepackage[f]{esvect}
\usepackage[normalem]{ulem}

\newcommand{\qee} {\hspace*{2mm}\hfill \ding{109}}

\renewcommand{\iff}{\leftrightarrow}
\renewcommand{\leq}{\leqslant}
\renewcommand{\geq}{\geqslant}
\renewcommand{\preceq}{\preccurlyeq}

\renewcommand{\phi}{\varphi}

\renewcommand{\Theta}{\varTheta}
\renewcommand{\Phi}{\varPhi}
\renewcommand{\Psi}{\varPsi}
\renewcommand{\Xi}{\varXi}
\renewcommand{\Omega}{\varOmega}
\renewcommand{\Gamma}{\varGamma}

\newtheorem{theorem}{Theorem}[section]
\newtheorem{define}[theorem]{Definition}

\newtheorem{exa}[theorem]{Example}

\newtheorem{exerc}[theorem]{Exercise}

\newtheorem{conj}[theorem]{Conjecture}

\newtheorem{ques}[theorem]{Open Question}
\newenvironment{question}{\begin{ques} \rm}{\qee\end{ques}}

\newtheorem{rem}[theorem]{Remark}
\newenvironment{remark}{\begin{rem} \rm}{\qee\end{rem}}

\DeclareMathOperator{\possible}{\text{\tikz[scale=.6ex/1cm,baseline=-.6ex,rotate=45,line width=.1ex]{
                            \draw (-1,-1) rectangle (1,1);}}}
\DeclareMathOperator{\necessary}{\text{\tikz[scale=.6ex/1cm,baseline=-.6ex,line width=.1ex]{
                            \draw (-1,-1) rectangle (1,1);}}}

 \DeclareMathOperator{\gnecessary}{\text{\tikz[scale=.6ex/1cm,baseline=-.6ex,line width=.1ex]{
                            \draw[gray, fill = gray, fill opacity = .90] (-1,-1) rectangle (1,1);}}}

                            \definecolor{uuxred}{cmyk}{0.2,1,0.9,0.1}

\newcommand{\gnum}[1]{{\ulcorner #1 \urcorner}}
\newcommand{\mc}[1]{\mathcal #1}
\newcommand{\num}[1]{{\underline {#1}}}
\newcommand{\mf}[1]{{\mathfrak {#1}}}
\newcommand{\verz}[1]{\{ #1 \}}
\newcommand{\tupel}[1]{{\langle #1 \rangle}}
\newcommand{\apr}{{\vartriangle}}
\newcommand{\aco}{{\triangledown}}
\newcommand{\opr}{\necessary}
\newcommand{\oco}{\possible}

\newcommand{\bleq}{\mathbin{\leq}}
\newcommand{\bgeq}{\mathbin{\geq}}
\newcommand{\qedright}{\belowdisplayskip=-12pt}

\title{On a Question of Hamkins'}
\author{Albert Visser}
 \address{Philosophy, Faculty of Humanities,
                Utrecht University,
               Janskerkhof 13,
                3512BL~~Utrecht, The Netherlands}
\email{a.visser@uu.nl}
\date{\today}

\keywords{Peano Arithmetic, Incompleteness, Extensionality}

\subjclass[2020]{03F30, 
03F40
}


\begin{document}
\begin{abstract}
Joel Hamkins asks whether there is a $\Pi^0_1$-formula $\rho(x)$ such that
$\rho(\gnum \phi)$ is independent over ${\sf PA}+\phi$, if this theory is consistent, where
this construction is extensional in $\phi$ with respect to {\sf PA}-provable equivalence.
We show that there can be no such extensional Rosser formula of any complexity.

We give a positive answer to Hamkins' question for the case where we replace
Extensionality by a weaker demand that we call \emph{Conditional Extensionality}. 
For this case, we prove an even stronger result, to wit, there is a $\Pi^0_1$-formula $\rho(x)$
that is extensional and $\Pi^0_1$-flexible.

We leave one important question open: what
happens when we weaken Extensionality to Consistent Extensionality, i.e., Extensionality for consistent extensions?
Consistent Extensionality is between full Extensionality and Conditional Extensionality.
\end{abstract}

\maketitle

{\Large
\textcolor{uuxred}{This preprint is superseded by preprint ArXiv:2506.13524, \emph{Extensional Independence}, by Taishi Kurahashi and Albert Visser}
}

\section{Introduction}
In his paper \cite{hamk:nonl22},  Joel Hamkins asks the following question (Question 28).\footnote{For notational consistency
with the rest of this notes we changed the variable-names in the formulation of the question.}
Is there a $\Pi^0_1$-formula $\rho(x)$ with the following properties?
\begin{description}
\item[Independence]  If ${\sf PA}+ \phi$ is consistent, then so are ${\sf PA}+ \phi+\rho(\gnum\phi)$ and\\
 ${\sf PA}+ \phi+\neg\, \rho(\gnum\phi)$.
\item[Extensionality] If ${\sf PA} \vdash \phi \iff \psi$, then  $ {\sf PA} \vdash\rho(\gnum \phi) \iff \rho(\gnum\psi)$.
\end{description}

\noindent
We call a formula that satisfies both Independence and Extensionality \emph{an extensional Rosser formula}.
We allow an extensional Rosser formula to be of any complexity.

We answer Hamkins' question negatively. There simply is no extensional Rosser formula, not just over {\sf PA},
but over a wide range of theories.

We will show that, if we weaken Extensionality to Conditional Extensionality, Hamkins' question gets a positive
answer. What is more, in this case, we can strengthen \emph{independence} to \emph{$\Pi^0_1$-flexibility}. Specifically, we prove
that there is a $\Pi^0_1$-formula $\rho(x)$ over {\sf PA} with the following properties.
\begin{description}
\item[$\Pi^0_1$-Flexibility] If ${\sf PA}+ \phi$ is consistent, then, for all $\Pi^0_1$-sentences $\pi$, the theory ${\sf PA}+ \phi+(\rho(\gnum\phi)\iff \pi)$ is
consistent.
\item[Conditional Extensionality] If ${\sf PA} \vdash \phi \iff \psi$, then $ {\sf PA}+ \phi \vdash\rho(\gnum \phi) \iff \rho(\gnum\psi)$.
\end{description}

\noindent
The proof of our result is an adaptation of the methods of \cite{viss:abso21}.

The notion of \emph{flexibility} was introduced by Saul Kripke in \cite{krip:flex62}. 
 We will call our formula $\rho(x)$ \emph{an extensional $\Pi^0_1$-reflexive formula}. 
 
 \begin{remark}{\footnotesize
 Kripke also studies flexibility not just for sentences but also
for formulas. This introduces an ambiguity. Our $\rho(x)$ is $\Pi^0_1$-flexible in
the sense that certain  sentential instances are $\Pi^0_1$-flexible sentences. Thus,
 \emph{parametrised sentence} would perhaps
be the better terminological choice for our notion over \emph{formula}. To avoid clumsy wording
we will persist in using \emph{flexible formula}. The reader will have to keep the context of the paper
in mind to get the right reading.}
\end{remark}
 
 Our result leaves the following question open.
 
 \begin{question}\label{vragendesmurf}
 Is there a Rosser formula that satisfies the following weakened form of Extensionality.
 
 \begin{description}
 \item[Consistent Extensionality]
 Suppose ${\sf PA}\nvdash \neg\, \phi$ and ${\sf PA} \vdash \phi\iff \psi$.
 Then, we have ${\sf PA}\vdash \rho(\gnum\phi) \iff \rho(\gnum\psi)$.
 \end{description}
 See also Remark~\ref{remarkablesmurf}.
 \end{question}
 
 \noindent We feel that Hamkins' problem is only partially solved as long as Question~\ref{vragendesmurf}
 remains open.
  
  \section{There are no Extensional Rosser Formulas}
  
  In this section, we prove that there is no formula that is both independent and
  extensional, thus answering
   Hamkins' original question in the negative.
  
  \begin{theorem}\label{grotesmurf}
  Let $U$ be any consistent theory that extends the Tarski-Mostowski-Robinson theory {\sf R}.
  Then, there are no extensional Rosser formulas over base theory $U$.
  \end{theorem}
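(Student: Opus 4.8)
The plan is to derive a contradiction from the assumption that an extensional Rosser formula $\rho(x)$ exists over $U$. The key intuition is that Extensionality forces $\rho(\gnum\phi)$ to depend only on the $U$-provable-equivalence class of $\phi$, and the two trivial classes — the $U$-provable sentences and the $U$-refutable sentences — give us fixed ``reference values'' for $\rho$ that we can then play off against a self-referential construction via the diagonal lemma.

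First I would observe that since $U$ extends ${\sf R}$ (and hence interprets enough arithmetic for the diagonal lemma and the representability of provability), all $U$-provable sentences lie in one equivalence class and all $U$-refutable sentences lie in another. Pick a fixed provable sentence $\top$ (say $0=0$) and a fixed refutable sentence $\bot$. By Extensionality, $U \vdash \rho(\gnum\phi) \iff \rho(\gnum\top)$ whenever $U \vdash \phi$, and $U \vdash \rho(\gnum\phi)\iff \rho(\gnum\bot)$ whenever $U \vdash \neg\,\phi$. The heart of the argument is then to build, by the diagonal lemma, a sentence $\phi$ that ``knows'' the truth-value of $\rho(\gnum\top)$ versus $\rho(\gnum\bot)$ and uses this to decide its own provability, thereby collapsing into one of the trivial classes and contradicting Independence.

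Concretely, I would use the fact that $\rho(\gnum\top) \to \rho(\gnum\bot)$ and its converse are each decided one way or the other at the meta-level — and whichever way they go, I can engineer a diagonal sentence $\phi$ equivalent (provably in $U$) to a statement of the form ``$\rho(\gnum\phi)$ behaves like the $\top$-value'' or ``like the $\bot$-value,'' forcing $U$ to prove or refute $\phi$. For instance, arrange via diagonalization that $U \vdash \phi \iff \bigl(\rho(\gnum\phi) \to \chi\bigr)$ for a suitable fixed $\chi \in \{\rho(\gnum\top),\rho(\gnum\bot)\}$; then trace through the cases. In each case, once $\phi$ turns out to be $U$-provable or $U$-refutable, Extensionality pins down $\rho(\gnum\phi)$ to be $U$-equivalent to $\rho(\gnum\top)$ or $\rho(\gnum\bot)$, and substituting back into the diagonal equivalence shows $U$ actually proves or refutes $\rho(\gnum\phi)$ — violating Independence, which demanded both $U+\phi+\rho(\gnum\phi)$ and $U+\phi+\neg\,\rho(\gnum\phi)$ be consistent.

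The main obstacle I anticipate is getting the self-reference to close cleanly despite the arbitrary (unbounded) complexity of $\rho$: unlike the classical Rosser construction, we may not assume $\rho$ is $\Sigma$ or $\Pi$, so I cannot lean on provable $\Sigma_1$-completeness to move between ``$U \vdash \rho$'' and ``$\rho$ holds.'' The argument must therefore be purely syntactic, routing everything through Extensionality (which is a $U$-provability statement about the equivalence of two instances) rather than through any reflection or completeness property of $\rho$ itself. I expect the delicate point is ensuring the diagonal sentence $\phi$ provably lands in one of the two trivial equivalence classes, so that Extensionality is actually applicable; the case analysis on whether $U \vdash \rho(\gnum\top)\iff\rho(\gnum\bot)$ (and if not, which direction fails) is what lets me force this collapse.
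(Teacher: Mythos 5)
Your overall strategy---diagonalize, force the diagonal sentence into a provable-equivalence class, then use Extensionality to transfer information to a fixed reference value such as $\rho(\gnum\bot)$---is indeed the shape of the correct argument. But as sketched, the plan has two genuine gaps. First, your case analysis does not get off the ground. You propose to split on whether $\rho(\gnum\top)\to\rho(\gnum\bot)$ and its converse are ``decided one way or the other at the meta-level.'' If ``decided'' means provable-or-not-provable, the unprovability cases give you nothing to work with: your only tools are Extensionality and Independence, and neither is triggered by an unprovability fact about $\rho$ at $\top$ or $\bot$. If ``decided'' means provable-or-refutable, the dichotomy is simply false: these sentences may be independent of $U$, and exactly in that case a fixed point of the form $\phi \iff (\rho(\gnum\phi)\to\chi)$ with $\chi \in \{\rho(\gnum\top),\rho(\gnum\bot)\}$ forces nothing, since $U+\phi$ then need not decide $\rho(\gnum\phi)$, so Independence never bites and $\phi$ need not collapse into either trivial class. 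Second, your intended contradiction misapplies Independence: you claim that once $\phi$ is $U$-provable \emph{or} $U$-refutable and $U$ decides $\rho(\gnum\phi)$, Independence is violated. That holds only in the provable case. Independence is conditional on the consistency of $U+\phi$, so when $U\vdash\neg\,\phi$ it is vacuous, and $U$ may decide $\rho(\gnum\phi)$ with no conflict. Since refutability of $\phi$ is precisely what Independence (used contrapositively) can force, a single diagonal sentence of the kind you propose cannot close the argument.

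The repair---which is the paper's proof---needs no case analysis and never mentions $\rho(\gnum\top)$: use \emph{two} fixed points, $\vdash \phi_0 \iff \rho(\gnum{\phi_0})$ and $\vdash \phi_1 \iff \neg\,\rho(\gnum{\phi_1})$. Each theory $U+\phi_i$ decides $\rho(\gnum{\phi_i})$ outright by its fixed-point equation, so Independence forces $\vdash\neg\,\phi_0$ and $\vdash\neg\,\phi_1$. Now Extensionality, applied to $\vdash\phi_i\iff\bot$, gives $\vdash\rho(\gnum{\phi_i})\iff\rho(\gnum\bot)$; feeding this back into the two fixed-point equations yields $\vdash\neg\,\rho(\gnum\bot)$ (from $\phi_0$) and $\vdash\rho(\gnum\bot)$ (from $\phi_1$), contradicting the consistency of $U$. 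The contradiction thus comes from the \emph{clash between two} forced values of $\rho(\gnum\bot)$, not from Independence being violated at a single refutable $\phi$. (Your worry about the unbounded complexity of $\rho$, by contrast, is a non-issue: the diagonal lemma over ${\sf R}$ applies to formulas of arbitrary complexity, and no completeness or reflection property of $\rho$ is used anywhere.)
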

  
  \noindent
  Note that we do not need any constraint on the complexity of the axiom set of $U$.
  
  \begin{proof}
  Suppose $\rho(x)$ is an extensional Rosser formula over $U$. We form the fixed points
  $\vdash \phi_0 \iff \rho(\gnum {\phi_0})$ and $\vdash \phi_1 \iff \neg\, \rho(\gnum {\phi_1})$.
  By Independence, we find (a) $\vdash \neg\,\phi_0$ and $\vdash \neg\,\phi_1$. 
  So,  $\vdash \phi_0 \iff \bot$ and $\vdash \phi_1 \iff \bot$. Thus, by Extensionality, (b)
  $\vdash \rho(\gnum{\phi_0}) \iff \rho(\gnum \bot)$ and  $\vdash \rho(\gnum{\phi_1}) \iff \rho(\gnum \bot)$.
  By the Fixed Point Equations in combination with (b), we find (c) $\vdash \phi_0 \iff \rho(\gnum \bot)$ and 
  $\vdash \phi_1 \iff \neg\, \rho(\gnum \bot)$. By (a) and (c), we find
  $\vdash \neg\,\rho(\gnum\bot)$ and $\vdash \rho(\gnum\bot)$, contradicting the consistency of $U$.
  \end{proof}
  
  \noindent By minor adaptations of the formulation and the proof, we find a similar result for theories that \emph{interpret}
  {\sf R}.

  \begin{remark}\label{remarkablesmurf}
  The above argument only uses the special case of Extensionality for inconsistent formulas.
  So, one may wonder what happens if we simply ban this case. 
  This is our Question~\ref{vragendesmurf}.
  
  We note that Consistent Extensionality implies Conditional Extensionality, for which we have our positive
  result. Thus, Consistent Extensionality is in the gap between our negative answer and our positive result.
  \end{remark}
  
  \section{G\"odel's Incompleteness Theorems and Flexibility}
  We briefly revisit G\"odel's Incompleteness Theorems and discuss their connection with $\Pi^0_1$-flexibility.

  \subsection{A Salient Consequence}\label{g-smurf}
  We formulate the for our paper  salient consequence of (the proofs of) G\"odel's Incompleteness Theorems.
  Let $U$ be a c.e. extension of {\sf EA}.\footnote{We can work with a much weaker theory then {\sf EA}. However, in the present paper,
  we do not strive for greatest generality.}
  We can find a $\Pi^0_1$-formula $\gamma$ such that:
  \begin{description}
  \item[Independence]
  Suppose $U+\phi$ has property $\mc X$. Then, both $U+\phi+\gamma(\gnum \phi)$ and $U+\phi+\neg\,\gamma(\gnum \phi)$
  are consistent.
  \item[Monotonicity]
  Suppose $U \vdash \phi \to \psi$. Then, $U \vdash \gamma(\gnum\phi) \to \gamma(\gnum \psi)$.
  \end{description}
  
  \noindent
  What can $\mc X$ be? It cannot be consistency by Theorem~\ref{grotesmurf}. Here are some possibilities.
  \begin{enumerate}[I.]
  \item
  $U+\phi$ is a true arithmetical theory.
  \item
  $U+\phi$ is $\omega$-consistent.
  \item
  $U+\phi$ is 1-consistent, in other words, $U+\phi$ satisfies $\Sigma^0_1$-reflection.
  \item
  $U+\phi \nvdash \neg\, \gamma(\gnum\phi)$.
  \end{enumerate}
  
  \noindent 
  We briefly discuss the ins and outs of these choices in Appendix~\ref{donaldsmurf}.
  
  Joel Hamkins in \cite{hamk:nonl22} shows that that Independence and Monotonicity cannot
  be combined under very general conditions. His result also covers a conditional version
  of Monotonicity. See also Appendix~\ref{gastsmurf}.

   \subsection{Flexibility}
 Saul Kripke in his paper \cite{krip:flex62} proves the existence of flexible predicates for a wide variety of
formal theories. Let $U$ be a consistent c.e. extension of {\sf EA}. We define:
\begin{itemize}
\item
$\chi$ is a $\Gamma$-flexible sentence (over $U$) iff,
for all $\Gamma$-sentences $\chi'$, we have $\chi \iff \chi'$ is consistent with $U$.
\end{itemize}

We see that independence is simply $\verz{\top,\bot}$-flexibility. So,
we can view $\Pi^0_1$-flexibility as a generalisation of the Rosser property.

Interestingly, in his paper, Kripke does not discuss the flexibility of the consistency statement.
The reason is undoubtedly that he was interested in a Kleene-style proof of a far more general theorem. 

 We work over Elementary Arithmetic {\sf EA} aka $\mathrm I\Delta_0+{\sf Exp}$.
  Let us first fix some concepts. Let ${\sf proof}(x,y)$ be some standard elementary arithmetisation of `$x$ is a proof from assumptions  of $y$'.
  Let the elementary formula ${\sf ass}(x,y)$ stand for `$y$ is an assumption that occurs in $y$'. As usual, we assume that ${\sf ass}(x,y)$ implies $y<x$ over
  {\sf EA}. We define:
  \begin{itemize}
  \item
   ${\sf proof}_{\tupel\alpha}(x,y) :\iff {\sf proof}(x,y) \wedge \forall z < x\, ({\sf ass}(x,z) \to \alpha(z))$.
   \item
    $\opr_{\tupel\alpha}\phi :\iff \exists x\, {\sf proof}_{\tupel\alpha}(x,\gnum \phi)$. 
    \item
    As usual we write $\oco$ for $\neg\opr\neg$.
  \item
   $\opr_{\tupel{\alpha}+\phi}\psi :\iff \opr_{\tupel{\alpha'}}\psi$, where $\alpha'(x) \iff (\alpha(x) \vee x= \gnum\phi)$.
   \end{itemize}
   
   \noindent
   We say that $\alpha$ \emph{semi-represents} an axiom set $\mc A$ of $U$ iff, for each $\phi \in \mc A$, we have $U \vdash \alpha(\gnum \phi)$.

We prove a simple result.

\begin{theorem} \label{smurfigesmurf}
Suppose  $U$ is a c.e. extension of {\sf EA}. Let $\alpha$ be an elementary formula that semi-represents an axiom set of $U$.
Let $\sigma$ be any $\Sigma^0_1$-sentence. Suppose $U \vdash \opr_{\tupel\alpha}\bot \iff \neg\, \sigma$.
Then, $U \vdash \opr_{\tupel\alpha} \bot$. Thus, if  $U \nvdash \opr_{\tupel\alpha} \bot$, then $\oco_\alpha\top$ is $\Pi^0_1$-flexible over $U$.
\end{theorem}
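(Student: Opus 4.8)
The goal reduces immediately: since the hypothesis gives $U\vdash\opr_{\tupel\alpha}\bot\iff\neg\,\sigma$, establishing $U\vdash\opr_{\tupel\alpha}\bot$ is the same as establishing $U\vdash\neg\,\sigma$. The plan is to run a short modal derivation inside $U$, using the standard Hilbert--Bernays--L\"ob derivability conditions for $\opr_{\tupel\alpha}$, which I first record. Because $\alpha$ is elementary and semi-represents an axiom set of $U$, and $U$ extends {\sf EA}, the predicate $\opr_{\tupel\alpha}$ satisfies over {\sf EA}: necessitation (if $U\vdash\phi$ then $U\vdash\opr_{\tupel\alpha}\phi$), distribution ($U\vdash\opr_{\tupel\alpha}(\phi\to\psi)\to(\opr_{\tupel\alpha}\phi\to\opr_{\tupel\alpha}\psi)$), and provable $\Sigma^0_1$-completeness, of which I only use the instance $U\vdash\sigma\to\opr_{\tupel\alpha}\sigma$. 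Necessitation holds because a $U$-proof of $\phi$ uses finitely many axioms, each provably satisfying $\alpha$; the other two conditions concern concatenation of proof codes and $\Sigma^0_1$-completeness of the underlying logic, and hold for any such $\opr_{\tupel\alpha}$. From these conditions one obtains the formalized L\"ob theorem for $\opr_{\tupel\alpha}$.

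For the derivation itself, from $U\vdash\sigma\to\neg\,\opr_{\tupel\alpha}\bot$ (a rearrangement of the hypothesis) necessitation and distribution give $U\vdash\opr_{\tupel\alpha}\sigma\to\opr_{\tupel\alpha}\neg\,\opr_{\tupel\alpha}\bot$; combined with $U\vdash\sigma\to\opr_{\tupel\alpha}\sigma$ this yields $U\vdash\sigma\to\opr_{\tupel\alpha}\neg\,\opr_{\tupel\alpha}\bot$. Now formalized L\"ob applied to $\bot$ reads $U\vdash\opr_{\tupel\alpha}(\opr_{\tupel\alpha}\bot\to\bot)\to\opr_{\tupel\alpha}\bot$, and since $\opr_{\tupel\alpha}\bot\to\bot$ is logically equivalent to $\neg\,\opr_{\tupel\alpha}\bot$, necessitation plus distribution turn this into $U\vdash\opr_{\tupel\alpha}\neg\,\opr_{\tupel\alpha}\bot\to\opr_{\tupel\alpha}\bot$. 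Chaining the two implications gives $U\vdash\sigma\to\opr_{\tupel\alpha}\bot$. Finally the hypothesis also gives $U\vdash\opr_{\tupel\alpha}\bot\to\neg\,\sigma$, so $U\vdash\sigma\to\neg\,\sigma$, whence $U\vdash\neg\,\sigma$ and therefore $U\vdash\opr_{\tupel\alpha}\bot$. I note that no consistency assumption on $U$ enters, as the claim is vacuous when $U$ is inconsistent.

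For the flexibility corollary, observe that $\oco_{\tupel\alpha}\top$ is logically equivalent to $\neg\,\opr_{\tupel\alpha}\bot$, a $\Pi^0_1$-sentence. If it failed to be $\Pi^0_1$-flexible, some $\Pi^0_1$-sentence $\pi$ would make $U+(\oco_{\tupel\alpha}\top\iff\pi)$ inconsistent, i.e.\ $U\vdash\opr_{\tupel\alpha}\bot\iff\neg\,\pi$. Taking $\sigma:=\neg\,\pi$, which is $\Sigma^0_1$, the main statement forces $U\vdash\opr_{\tupel\alpha}\bot$, contradicting the assumption $U\nvdash\opr_{\tupel\alpha}\bot$. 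Hence $\oco_{\tupel\alpha}\top$ is $\Pi^0_1$-flexible over $U$.

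The only nonroutine point is the justification of the derivability conditions --- specifically provable $\Sigma^0_1$-completeness and, through it, formalized L\"ob --- for the merely semi-representing $\alpha$ over the weak base {\sf EA}. This requires the axiom set witnessed by $\alpha$ to contain enough of {\sf EA} for the internal $\Sigma^0_1$-completeness argument to close. I would remove this friction by fixing the axiomatization so that the axioms of {\sf EA} are among the $\alpha$-axioms, which is harmless since $U$ extends {\sf EA}; everything else is bookkeeping with the three conditions.
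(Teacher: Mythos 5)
Your main derivation is correct, and it is a genuine variant of the paper's argument rather than a reproduction of it. The paper stays at the meta-level: from the hypothesis and verifiable $\Sigma^0_1$-completeness it shows $U+\sigma \vdash \oco_{\tupel{\alpha}+\sigma}\top$ (inside $U+\sigma$ one has $\neg\,\opr_{\tupel\alpha}\bot$ from the hypothesis and $\opr_{\tupel\alpha}\sigma$ from $\Sigma^0_1$-completeness, and any $(\alpha+\sigma)$-proof of $\bot$ converts into an $\alpha$-proof of $\bot$), and then applies the Second Incompleteness Theorem to the extended theory $U+\sigma$ with the semi-representation $\alpha+\sigma$, concluding that $U+\sigma$ is inconsistent, whence $U \vdash \neg\,\sigma$ and so $U \vdash \opr_{\tupel\alpha}\bot$. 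You instead internalize everything over $U$: you derive $U \vdash \sigma \to \opr_{\tupel\alpha}\neg\,\opr_{\tupel\alpha}\bot$ and compose it with the formalized second incompleteness theorem $U \vdash \opr_{\tupel\alpha}\neg\,\opr_{\tupel\alpha}\bot \to \opr_{\tupel\alpha}\bot$, obtained from formalized L\"ob. The ingredients are identical (necessitation, distribution, provable $\Sigma^0_1$-completeness for a semi-representing numeration); the difference is whether G2 is invoked externally for $\alpha+\sigma$ or internally for $\alpha$. Your route is more self-contained but must set up L\"ob explicitly; the paper's is shorter because it uses G2 as a black box, at the price of needing it in the generality of semi-represented axiom sets. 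Your flexibility corollary is the same contrapositive as the paper's; note only a sign slip: inconsistency of $U+(\oco_{\tupel\alpha}\top \iff \pi)$ gives $U \vdash \opr_{\tupel\alpha}\bot \iff \pi$ (not $\iff \neg\,\pi$), which is exactly the required $\opr_{\tupel\alpha}\bot \iff \neg\,\sigma$ for $\sigma := \neg\,\pi$, as you then correctly take it.

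One point does need repair: your proposed way of securing provable $\Sigma^0_1$-completeness --- ``fixing the axiomatization so that the axioms of {\sf EA} are among the $\alpha$-axioms'' --- is not available. The theorem is about the given $\alpha$: both the hypothesis $U \vdash \opr_{\tupel\alpha}\bot \iff \neg\,\sigma$ and the conclusion $U \vdash \opr_{\tupel\alpha}\bot$ mention it, and passing to $\alpha'(x) := \alpha(x) \vee {}$``$x$ is an {\sf EA}-axiom'' proves a statement about $\opr_{\tupel{\alpha'}}$ instead. Since {\sf EA} is not finitely axiomatized, $U$ need not prove $\opr_{\tupel{\alpha'}}\bot \to \opr_{\tupel\alpha}\bot$ (nor the transfer of the hypothesis): internally an $\alpha'$-proof may use nonstandard {\sf EA}-axioms, while semi-representation only guarantees $\opr_{\tupel\alpha}$-provability of each standard one. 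Fortunately no change of $\alpha$ is needed. Necessitation, which you justified using semi-representation alone, yields $U \vdash \opr_{\tupel\alpha}\chi$ for each of the finitely many axioms $\chi$ of a finitely axiomatized $\Sigma^0_1$-complete subtheory of $U$, e.g.\ Robinson's {\sf Q}; formalized $\Sigma^0_1$-completeness of {\sf Q} is available in {\sf EA}; and {\sf EA} verifies that splicing the internally given $\alpha$-proofs of those finitely many axioms into a {\sf Q}-proof produces an $\alpha$-proof. This gives $U \vdash \sigma' \to \opr_{\tupel\alpha}\sigma'$ for every $\Sigma^0_1$-sentence $\sigma'$, hence (since $\opr_{\tupel\alpha}\psi$ is $\Sigma^0_1$ for elementary $\alpha$) the third derivability condition and L\"ob, for the original $\alpha$. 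With your final remark replaced by this justification, your proof is complete.
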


\begin{proof}
By verifiable $\Sigma^0_1$-completeness,
we find $U+ \sigma \vdash \oco_{\tupel\alpha+\sigma} \top$. Hence, by the Second Incompleteness Theorem,
$U \vdash \neg\, \sigma$ and, thus, $U \vdash \opr_{\tupel\alpha} \bot$.
\end{proof}

Theorem~\ref{smurfigesmurf} also follows from the well-known fact that the inconsistency statement is
$\Pi^0_1$-conservative.\footnote{Per Lindstr\"om, in \cite[p94]{lind:aspe03}, ascribes this to Georg Kreisel in \cite{kreis:weak62}.} in combination with the following simple insight. 

\begin{theorem}\label{saaiesmurf}
Suppose $\neg\,\phi$ is $\Gamma$-conservative over $U$ and
$U + \phi$ is consistent. Then, $\phi$ is $\Gamma$-flexible.
\end{theorem}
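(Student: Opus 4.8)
The plan is to fix an arbitrary $\Gamma$-sentence $\chi'$ and show that $U+(\phi\iff\chi')$ is consistent, since that is precisely what $\Gamma$-flexibility of $\phi$ asks for. I would argue by contraposition: assume that $U+(\phi\iff\chi')$ is \emph{inconsistent}, so that $U\vdash\neg\,(\phi\iff\chi')$, and from this derive that $U+\phi$ is inconsistent, contradicting the hypothesis.

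The key manipulation is that $\neg\,(\phi\iff\chi')$ is propositionally equivalent to $\phi\iff\neg\,\chi'$, so $U$ proves both $\neg\,\chi'\to\phi$ and $\phi\to\neg\,\chi'$. The first implication, read contrapositively, is $U\vdash\neg\,\phi\to\chi'$, that is, $U+\neg\,\phi\vdash\chi'$. This is the one point where the hypotheses genuinely enter: because $\chi'$ is a $\Gamma$-sentence and $\neg\,\phi$ is $\Gamma$-conservative over $U$, this upgrades to $U\vdash\chi'$. Combining now with the second implication, from $U\vdash\phi\to\neg\,\chi'$ we get $U\vdash\chi'\to\neg\,\phi$, and together with $U\vdash\chi'$ this yields $U\vdash\neg\,\phi$. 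Hence $U+\phi$ is inconsistent, contradicting the standing assumption.

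I do not expect a real obstacle here: the argument is pure propositional reasoning plus a single invocation of conservativity. The only point requiring care is to apply conservativity in the right direction, namely to the $\Gamma$-sentence $\chi'$ as derived from $\neg\,\phi$ (not from $\phi$), and to keep straight which of the two implications delivers the final contradiction. Accordingly I would expect the formal write-up to occupy only a few lines.
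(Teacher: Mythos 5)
Your proof is correct and is essentially the paper's own argument: both proceed by assuming $U+(\phi\iff\chi')$ is inconsistent, rewriting the negated biconditional propositionally, applying $\Gamma$-conservativity of $\neg\,\phi$ to the resulting derivation of $\chi'$ to get $U\vdash\chi'$, and then concluding $U\vdash\neg\,\phi$, contradicting the consistency of $U+\phi$. The only cosmetic difference is that the paper rewrites $\neg\,(\phi\iff\chi')$ as $\neg\,\phi\iff\chi'$ where you use the equally valid form $\phi\iff\neg\,\chi'$ and take contrapositives.
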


\begin{proof}
Suppose $\neg\,\phi$ is $\Gamma$-conservative over $U$ and $\chi \in \Gamma$ and $\phi\iff \chi$ is inconsistent with $U$.
Then, $U \vdash \neg\,\phi \iff \chi$. Hence, by $\Gamma$-conservativity, $U \vdash \chi$. So, $U \vdash \neg\, \phi$.
\end{proof}

\noindent Curiously, Theorem~\ref{saaiesmurf} does not need any constraint on $U$. Its axiom set can have any complexity.
It need not have numerals.

\begin{remark}
{\footnotesize We note that the proof of Lemma~\ref{smurfigesmurf} is not constructive. It is easily shown that this is essential.
The condition for $\Sigma^0_1$-flexibility is in the constructive case $U \nvdash \neg\neg\,\opr_{\tupel\alpha}\bot$.
We note that the negation of this last condition constitutes a counterexample since  $\neg\neg\,\opr_{\tupel\alpha}\bot$
is equivalent to $\neg\, (\neg\,\opr_{\tupel\alpha}\bot \iff \top)$.}
\end{remark}

%


\begin{remark}{\footnotesize
Under the circumstances of Theorem~\ref{smurfigesmurf}, let $\nu$ be a $\Pi^0_1$-Rosser sentence of $U$ for $\opr_{\tupel{\alpha}}$.
Then, we have: if $U \vdash \nu \iff \sigma$, then $U \vdash \opr_{\tupel{\alpha}}\bot$. 

Conversely, if $U \vdash \opr_{\tupel{\alpha}}\bot$, then
$U \vdash \nu \iff \neg\, \nu^\bot$, where $\nu^\bot$ is the opposite of $\nu$. Thus, the Rosser construction does deliver an independent sentence
for all consistent extensions of {\sf EA}, but not a $\Pi^0_1$-flexible
sentence for some consistent extensions of {\sf EA}.}
\end{remark}

\section{There is a Conditionally Extensional  $\Pi^0_1$-Flexible Formula}
We develop our positive result that there is a conditionally extensional $\Pi^0_1$-flexible formula for
  c.e. base theories extending {\sf PA} and, more generally, for essentially reflexive, sequential c.e. theories.
  We take a consistent arithmetical extension $U$ of {\sf PA} as a base. Let $\alpha$ be an elementary formula that
  gives an axiom set $\mc A$ of $U$. Specifically, we demand that if $\psi\in \mc A$, then $U \vdash \alpha(\gnum\psi)$ and
   if $\psi\not\in \mc A$, then $U \vdash\neg\, \alpha(\gnum\psi)$.
 It easy to see how the results extend to essentially reflexive sequential  c.e. theories.
 This only requires slight adaptations of the formulations.
 
  The sentences we provide will be consistency statements of theories of slow provability.
  
  \subsection{History}
  Hamkin's question is inspired by the following result of Volodya Shavrukov and Albert Visser in \cite{shav:unif14}.
\begin{theorem}[Shavrukov, Visser]
There is a $\Delta^0_2$-formula $\rho(x)$ over {\sf PA} with the following properties.
\begin{description}
\item[Independence] If ${\sf PA}+ \phi$ is consistent, then so are ${\sf PA}+ \phi+\rho(\gnum\phi)$ and\\
 ${\sf PA}+ \phi+\neg\, \rho(\gnum\phi)$.
\item[Conditional Extensionality] If ${\sf PA} \vdash \phi \iff \psi$, then $ {\sf PA}+ \phi \vdash\rho(\gnum \phi) \iff \rho(\gnum\psi)$.
\end{description}
\end{theorem}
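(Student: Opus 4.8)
The plan is to reconcile two demands that pull in opposite directions. \emph{Independence} is most naturally secured by a Rosser-style witness comparison, which is inherently sensitive to the syntactic shape of $\phi$; whereas \emph{Conditional Extensionality} asks that the construction see $\phi$ only through the theory ${\sf PA}+\phi$. The extra expressive power of $\Delta^0_2$ over the $\Sigma^0_1/\Pi^0_1$ level at which ordinary Rosser sentences live is exactly the resource I would spend to bridge this gap, exploiting the characterisation of $\Delta^0_2$ predicates as limits of computable approximations.

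First I would isolate a \emph{canonical representative} of the ${\sf PA}$-provable-equivalence class of $\phi$: let $c(\gnum\phi)$ be the least $n$ with ${\sf PA}\vdash \phi \iff \theta_n$, where $\theta_n$ is the sentence with G\"odel number $n$. The graph of $c$ is $\Delta^0_2$ (a $\Sigma^0_1$ ``a proof exists'' clause conjoined with a $\Pi^0_1$ ``no smaller index works'' clause), and $c$ is extensional on the nose: if ${\sf PA}\vdash \phi\iff\psi$, then $c(\gnum\phi)$ and $c(\gnum\psi)$ are literally the same number. I would then define $\rho(\gnum\phi)$ by a diagonal fixed point as a Rosser sentence for ${\sf PA}+\phi$ whose witness comparison is normalised through the canonical index $c(\gnum\phi)$ rather than through the literal $\phi$. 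Since a Rosser predicate is $\Pi^0_1$ and it is here composed with the $\Delta^0_2$ map $c$, the resulting $\rho$ sits at the $\Delta^0_2$ level, matching the complexity claimed.

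Independence would then follow from the classical Rosser argument: over a consistent ${\sf PA}+\phi$ one checks that the normalised race is genuinely undecided, so neither $\rho(\gnum\phi)$ nor its negation is provable, exactly the self-referential tension exploited in Theorem~\ref{grotesmurf} but now run in the positive direction. Conditional Extensionality is where the real work lies: when ${\sf PA}\vdash\phi\iff\psi$, the sentences $\rho(\gnum\phi)$ and $\rho(\gnum\psi)$ are, in the standard model, the very same sentence, so all that remains is to get ${\sf PA}+\phi$ to \emph{prove} that the two canonical indices coincide.

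The main obstacle is precisely this internal verification, and the naive minimisation defining $c$ is not enough for it: the $\Pi^0_1$ clause ``no smaller index is ${\sf PA}$-provably equivalent to $\phi$'' is an unprovable non-provability statement, and ${\sf PA}+\phi$ has no reason to establish $c(\gnum\phi)=c(\gnum\psi)$ outright. Here I would lean on the essential reflexivity of ${\sf PA}$: because ${\sf PA}+\phi$ proves the consistency of each of its finite subtheories, and hence reflection for proofs below any standard bound, it can internally match the two approximation sequences $c_s(\gnum\phi)$ and $c_s(\gnum\psi)$ supplied by the $\Delta^0_2$ definition and show that they converge to a common limit, without ever having to certify a global non-provability fact. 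Casting $c$ (and with it $\rho$) in limit form is what lets ${\sf PA}+\phi$ reason about the finite stages rather than commit to a fixed index, and threading the Rosser witness comparison through this limit so that undecidedness and convergence hold simultaneously is the technical heart of the construction.
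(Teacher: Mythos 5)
Your construction cannot work, and the obstruction is this paper's own negative result. The canonical-index map $c$ is not merely extensional ``on the nose'' in the standard model; it is \emph{${\sf PA}$-provably} extensional. If ${\sf PA}\vdash\phi\iff\psi$, then the standard proof of $\phi\iff\psi$ yields, by formalised modus ponens alone --- no reflection and no essential reflexivity are needed ---
\[
{\sf PA} \vdash \forall n\, \bigl(\opr(\phi\iff\theta_n) \iff \opr(\psi\iff\theta_n)\bigr),
\]
where $\opr$ denotes arithmetised ${\sf PA}$-provability. Hence ${\sf PA}$ proves that the two definable index sets coincide, that both are nonempty (witnesses $\gnum\phi$, resp.\ $\gnum\psi$), and therefore that their least elements are equal: ${\sf PA}\vdash c(\gnum\phi)=c(\gnum\psi)$, as an identity of definite descriptions. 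Since your $\rho(x)$ is (up to provable equivalence) $R(c(x))$ for a single formula $R$, this gives ${\sf PA}\vdash\rho(\gnum\phi)\iff\rho(\gnum\psi)$: \emph{full} Extensionality, not merely Conditional Extensionality. By Theorem~\ref{grotesmurf}, full Extensionality is incompatible with Independence over any consistent extension of {\sf R}. So Independence must fail for your $\rho$, and no repair of the Rosser part can rescue a construction that factors, ${\sf PA}$-verifiably, through the provable-equivalence class; that normalisation is precisely the forbidden move.

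One can also see directly where ``the classical Rosser argument'' breaks, because you conflate two different tasks: proving the \emph{identity} $c(\gnum\phi)=c(\gnum\psi)$ (easy, as above) and \emph{evaluating} $c(\gnum\phi)$ to its true numeral $\num{n_0}$ (impossible: when $\gnum\bot<n_0$, evaluation would require ${\sf PA}+\phi\vdash\neg\,\opr(\phi\iff\bot)$, i.e., the consistency of ${\sf PA}+\phi$, contradicting the Second Incompleteness Theorem). Extensionality needs only the identity, but Rosser's argument needs the evaluation: the fixed-point equation for $R$ is usable by ${\sf PA}+\phi$ only at a determined index, and in a model of ${\sf PA}+\phi$ that believes $\opr(\phi\iff\theta_m)$ for some spurious $m<n_0$, your $\rho(\gnum\phi)$ evaluates as $R(m)$, a Rosser sentence for a theory that has nothing to do with ${\sf PA}+\phi$; nothing makes that race undecided. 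Note finally that the paper does not prove this theorem at all --- it quotes it from \cite{shav:unif14} --- and the construction there (generating Smory\'nski's Rosser sentences via a Feferman-style provability predicate, cf.\ \cite{smor:self89}, \cite{shav:smar94}) is designed so that invariance holds only \emph{inside} ${\sf PA}+\phi$ and genuinely fails over {\sf PA} alone: any correct solution must be non-extensional over the base theory, which is exactly what your design principle rules out.
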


\noindent
We call a formula that satisfies both Independence and Conditional Extensionality \emph{a
conditionally extensional Rosser formula.} We note that Shavrukov {\&} Visser's result can be rephrased as follows.
For any formula $\theta(x)$, let $\theta^+_\phi$ be $\phi \wedge \theta(\gnum\phi)$.
There is a $\Delta^0_2$-formula $\rho(x)$ over {\sf PA} such that:
\begin{description}
\item[Independence] If ${\sf PA}+ \phi$ is consistent, then so are ${\sf PA}+ \phi + \rho^+_\phi$ and\\
 ${\sf PA}+ \phi+\neg\, \rho^+_\phi$.
\item[Extensionality] If ${\sf PA} \vdash \phi \iff \psi$, then $ {\sf PA} \vdash\rho^+_\phi \iff \rho^+_\psi$.
\end{description}

\noindent In other words, Conditional Extensionality for $\rho$ entails Extensionality for $\rho^+$.

Relative to Shavrukov {\&} Visser's result, we see that Hamkins is asking for two improvements:
(i) bring down $\Delta^0_2$  to $\Pi^0_1$ and (ii) improve Conditional Extensionality to
Extensionality. We have seen that (ii) cannot be fulfilled.  We will show that (i) can be satisfied, when we keep
Conditional Extensionality.
 
 The formula employed by Shavrukov {\&} Visser is a meaningful formula. It generates
 Smory\'nski's  Rosser sentences from \cite{smor:self89}.
 Moreover, it generates  fixed-point-free sentences that satisfy (uniquely modulo provable equivalence) the 
 G\"odel equation for a certain Feferman provability predicate.
 See  \cite{smor:self89} and \cite{shav:smar94}. 
 Thus, the generated sentences, while not being consistency statements, do share two important properties with consistency
 statements: being explicit, i.e., fixed-point-free, and being unique solutions of a G\"odel equation.
 
  It is easy to see that Fefermanian G\"odel sentences are both $\Pi^0_1$- and $\Sigma^0_1$-flexible.
 
 \medskip
 The  conditionally extensional $\Pi^0_1$-flexible $\Pi^0_1$-formula offered in this paper is an adaptation of the construction of 
  \cite{viss:abso21}. It  delivers slow consistency statements in the style of
 \cite{viss:abso21}. See also \cite{frie:slow13} for a more proof-theoretic take on 
 slow provability. The construction of the formulas is fixed-point-free.

\subsection{Uniform Provability}\label{prinsbsmurf}
We develop the basics of uniform provability  in this subsection.
We fix our base theory $U$ that {\sf EA}-verifiably extends {\sf PA}. The axiomatisation of $U$ will be given by
the elementary formula $\alpha$. We will suppress the subscript $\tupel\alpha$, thus writing e.g.
$\opr\psi$ for $\opr_{\tupel{\alpha}}\psi$ and $\opr_\phi\psi$ for $\opr_{\tupel{\alpha}+\phi}\psi$, etcetera.

 Even if
our main argument is about $U$,
a lot of the present development is available in Elementary Arithmetic, {\sf EA},
 aka $\mathrm I\Delta_0+{\sf Exp}$. 
 
 Uniform proofs are a minor modification of proofs. We will use uniform provability to define
 our notion of smallness and we will use smallness to define slow provability as provability from
 small axioms.
 
We define the relation $\preceq_n$ on sentences $\leq n$ as follows. We have:
$\phi \preceq_n \psi$ iff $\psi$ follows from $\phi$ plus the sentences 
provable in $U$ by proofs $\leq n$ by propositional logic. 
We use $\preceq_x$ for the arithmetization of $\preceq_n$.
We note that $\preceq_x$ will be {\sf EA}-provably reflexive and transitive.

 We define:
 \begin{itemize}
  \item
   $\opr_{\phi,(x)}\psi :\iff \exists p\bleq x\, {\sf proof}_\phi(p,\gnum\psi)$,
\item
$\gopr_{\phi,(x)}\psi :\iff \exists \chi \bleq x \, (\phi \preceq_x \chi \wedge  \opr_{\chi,(x)} \psi)$,
\item
$\gopr_\phi\psi :\iff \exists x\, \gopr_{\phi,(x)} \psi$.
  \end{itemize}
  
  The grey box stands for \emph{uniform provability}. We note that the uniformity is w.r.t the formula in the subscript.
  The next theorem explicates the relationship between uniform provability and provability.
  
  \begin{theorem}\label{mageresmurf}
  \begin{enumerate}[1.]
  \item
  ${\sf EA} \vdash \opr_{\phi,(x)} \psi \to \gopr_{\phi,(x)}\psi$, 
  \item
  There is an elementary function $F$ such that
  ${\sf EA} \vdash \gopr_{\phi,(x)} \psi \to \opr_{\phi,(Fx)}\psi$, 
  \item
  ${\sf EA} \vdash \opr_\phi\psi \iff \gopr_\phi\psi$.
  \end{enumerate}
  \end{theorem}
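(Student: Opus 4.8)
The plan is to treat (2) as the heart of the matter and to read off (1) and (3) as bookkeeping around it.

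For (1), the natural witness is $\chi := \phi$. Given $p \bleq x$ with ${\sf proof}_\phi(p,\gnum\psi)$, I would invoke the {\sf EA}-provable reflexivity of $\preceq_x$ to get $\phi \preceq_x \phi$, so that the very same $p$ witnesses $\opr_{\phi,(x)}\psi$ and hence $\gopr_{\phi,(x)}\psi$. The only point needing attention is the size side-condition $\gnum\phi \bleq x$, needed so that $\phi$ lies in the range of the bounded quantifier on $\chi$; when $\gnum\phi > x$ the convention ${\sf ass}(p,z) \to z \bles p \bleq x$ forces $\phi$ not to occur as an assumption in $p$, so $p$ is already a proof from $\alpha$ alone and one may instead take $\chi := \top$. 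This case analysis is routine and fully elementary.

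The substantive direction is (2). Unpacking $\gopr_{\phi,(x)}\psi$ yields a $\chi \bleq x$ together with (i) a witness that $\phi \preceq_x \chi$ and (ii) a proof $q \bleq x$ with ${\sf proof}_\chi(q,\gnum\psi)$. By the definition of $\preceq_x$, the witness for (i) supplies finitely many sentences $\theta_1,\dots,\theta_k$, each with a $U$-proof of size $\bleq x$ using only $\alpha$-assumptions, such that $\chi$ is a propositional consequence of $\phi,\theta_1,\dots,\theta_k$. I would then assemble an ordinary proof of $\psi$ from $\phi$ by concatenating three blocks: the $k$ short $U$-proofs of the $\theta_i$; a propositional derivation $D$ of $\chi$ from $\phi$ (as the sole genuine assumption) together with the now-derived $\theta_i$; and finally $q$, re-read as a derivation of $\psi$ from $\chi$. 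Every assumption of the resulting object either satisfies $\alpha$ or equals $\gnum\phi$, so it witnesses $\opr_{\phi,(y)}\psi$ for its own size $y$.

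It then remains to bound $y$ elementarily. Since each $\theta_i$ has code $\bleq x$ there are at most $x$ of them, each with a proof $\bleq x$; the block $q$ has size $\bleq x$; and the crucial point is that the gluing derivation $D$ has size bounded by an elementary function of $x$. I expect this last bound to be the main obstacle: it is a formalised, size-controlled version of propositional completeness for the consequence relation hidden in $\preceq_x$, to be carried out inside {\sf EA}, where ${\sf Exp}$ furnishes the totality needed for the sizes involved. Granting it, the total size is bounded by an elementary $F$ assembled from these summands, and the concatenation together with its assumption-tracking is itself an elementary, {\sf EA}-verifiable operation on codes; this yields ${\sf EA} \vdash \gopr_{\phi,(x)}\psi \to \opr_{\phi,(Fx)}\psi$. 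Finally, (3) follows by existential quantification: $\opr_\phi\psi \to \gopr_\phi\psi$ is immediate from (1) under $\exists x$, while from (2) we get $\exists x\,\gopr_{\phi,(x)}\psi \to \exists x\,\opr_{\phi,(Fx)}\psi$, and since $F$ is elementary and hence {\sf EA}-provably total this gives $\exists y\,\opr_{\phi,(y)}\psi$, i.e.\ $\opr_\phi\psi$, with all steps available in {\sf EA}.
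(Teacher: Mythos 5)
The paper itself gives no proof of this theorem (it is explicitly left to the reader as ``simple''), so the only yardstick is correctness, and your argument is correct and along the intended lines: witnessing $\chi := \phi$ via the {\sf EA}-provable reflexivity of $\preceq_x$ for (1), assembling an honest proof with an elementarily bounded size for (2), and existential generalisation plus the {\sf EA}-provable totality of the elementary $F$ for (3). The one step you flag as the main obstacle --- an {\sf EA}-verified elementary bound on the gluing derivation $D$ --- is indeed where the content lies, but it is unproblematic either way the arithmetization of $\preceq_x$ is set up: if $\phi \preceq_x \chi$ is arithmetized as the existence of a propositional derivation of bounded size, the bound on $D$ comes for free from the witness, and if it is arithmetized semantically (tautological consequence over the finitely many relevant sentences), then the standard truth-table-based completeness construction produces a derivation of size exponential in the sizes involved, which is elementary and available in {\sf EA} since ${\sf Exp}$ holds; your handling of the size side-conditions in (1) (the case $\gnum\phi > x$, where the assumption convention forces $p$ to be an $\alpha$-proof) is likewise the right way to dispose of the edge cases.
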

  
  \noindent We leave the simple proof to the reader.
  The next theorem gives us the desired uniformity property.
  
  \begin{theorem}\label{bravesmurf}
  \begin{enumerate}[i.]
  \item
  Suppose $p$ is the code of a $U$-proof of $\phi \to \psi$.
  Then, \[{\sf EA}\vdash \forall x\bgeq \num p\,\forall \chi \bleq x\,(\gopr_{\psi,(x)}\chi \to \gopr_{\phi,(x)}\chi).\]
  \item
    Suppose $p$ is the code of a $U$-proof of $\phi \iff \psi$.
  Then, \[{\sf EA}\vdash \forall x\bgeq \num p\,\forall \chi \bleq x\,(\gopr_{\psi,(x)}\chi \iff \gopr_{\phi,(x)}\chi).\]
  \end{enumerate}
  \end{theorem}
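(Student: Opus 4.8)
The plan is to prove part (i) first, since part (ii) is the conjunction of part (i) applied to the proofs of $\phi\to\psi$ and $\psi\to\phi$ that one extracts from the given proof of $\phi\iff\psi$. For part (i), fix $p$ coding a $U$-proof of $\phi\to\psi$, and argue inside {\sf EA}. Take any $x\bgeq\num p$ and any $\chi\bleq x$, and suppose $\gopr_{\psi,(x)}\chi$. Unfolding the grey-box definition, this gives some $\xi\bleq x$ with $\psi\preceq_x\xi$ and $\opr_{\xi,(x)}\chi$. The goal is to produce a witness for $\gopr_{\phi,(x)}\chi$, i.e.\ some $\xi'\bleq x$ with $\phi\preceq_x\xi'$ and $\opr_{\xi',(x)}\chi$. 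The natural candidate is to take $\xi'$ to be $\xi$ itself, so the whole argument reduces to showing $\phi\preceq_x\xi$.

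First I would verify $\phi\preceq_x\psi$. Recall $\phi\preceq_x\psi$ holds iff $\psi$ follows propositionally from $\phi$ together with the sentences having $U$-proofs of code $\bleq x$. Since $p$ is a $U$-proof of $\phi\to\psi$ and $\num p\bleq x$, the sentence $\phi\to\psi$ is among those available sentences, and then $\psi$ follows propositionally from $\phi$ and $\phi\to\psi$ by modus ponens. This is the one place where the hypothesis $x\bgeq\num p$ is used, and it is exactly what licenses feeding the proof $p$ into the $\preceq_x$ relation. I would phrase this carefully so that it is {\sf EA}-verifiable: the arithmetized $\preceq_x$ only asks for a propositional derivation using premises witnessed by proofs below $x$, and $p$ supplies such a witness.

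Next I would chain this with the assumed $\psi\preceq_x\xi$ using the {\sf EA}-provable transitivity of $\preceq_x$, noted just before Theorem~\ref{mageresmurf}, to conclude $\phi\preceq_x\xi$. Together with the already-available $\opr_{\xi,(x)}\chi$ and $\xi\bleq x$, this exhibits $\xi$ as the required witness for $\gopr_{\phi,(x)}\chi$. Since all of the quantifier manipulations are bounded (everything ranges over objects $\bleq x$ or is the fixed standard numeral $\num p$) and the only nontrivial inputs are the modus-ponens step and transitivity of $\preceq_x$, the whole derivation formalizes in {\sf EA}. For part (ii), given a $U$-proof $p$ of $\phi\iff\psi$, one extracts standard proofs of $\phi\to\psi$ and of $\psi\to\phi$ whose codes are elementarily bounded in $p$; applying part (i) in both directions (adjusting the numeral bound to the larger of the two extracted codes, which is still {\sf EA}-provably $\bleq x$ once $x\bgeq\num p$) yields both implications and hence the biconditional.

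The main obstacle I expect is the first step: getting {\sf EA} to verify $\phi\preceq_x\psi$ from the existence of the proof $p$ with $\num p\bleq x$. The definitional unwinding of the arithmetized $\preceq_x$ must be handled so that ``$\phi\to\psi$ is provable by a proof $\bleq x$'' is genuinely one of the premises the propositional closure is allowed to use, and one must confirm that modus ponens is captured by the ``propositional logic'' closure in the definition of $\preceq_x$. Everything else is routine bounded reasoning together with the reflexivity and transitivity of $\preceq_x$ that the text has already granted us.
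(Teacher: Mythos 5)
Your part (i) is correct and is essentially the paper's own argument: the single use of $x\bgeq\num p$ is to make $\phi\to\psi$ available as a $\preceq_x$-premise, giving $\phi\preceq_x\psi$ by modus ponens, after which the {\sf EA}-provable transitivity of $\preceq_x$ lets you reuse the witness $\xi$ (the paper calls it $\theta$) unchanged.

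Part (ii), however, has a genuine gap. You derive (ii) from (i) by extracting proofs $q_1,q_2$ of $\phi\to\psi$ and $\psi\to\phi$ from the given proof $p$ of $\phi\iff\psi$, and then claim that the adjusted bound, the larger of the two extracted codes, ``is still {\sf EA}-provably $\bleq x$ once $x\bgeq\num p$.'' That claim is false in general: a proof obtained by appending elimination steps to $p$ has code strictly larger than $p$ (the paper's conventions even stipulate that the constituents of a proof have smaller codes than the proof itself), so $x\bgeq\num p$ does not yield $x\bgeq\num{q_1}$ or $x\bgeq\num{q_2}$. Applying (i) to $q_1$ and $q_2$ therefore only establishes the biconditional for $x$ above $\max(\num{q_1},\num{q_2})$, which is strictly weaker than the stated $\forall x\bgeq\num p$, and leaves the interval $\num p\bleq x<\max(\num{q_1},\num{q_2})$ uncovered. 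The repair is to drop the extraction entirely and treat (ii) exactly as you treated (i) --- this is what the paper's ``Case (ii) is similar'' amounts to: since $\preceq_x$ is closed under propositional consequence from all $U$-theorems having proofs $\bleq x$, the biconditional $\phi\iff\psi$ itself becomes an available premise as soon as $x\bgeq\num p$, so both $\phi\preceq_x\psi$ and $\psi\preceq_x\phi$ hold directly, and your part-(i) argument then runs in each direction with no change of bound.
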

 
\begin{proof}
We prove (i). Case (ii) is similar. Suppose $p$ is the code of a $U$-proof of $\phi \to \psi$.
It follows that $\phi \preceq_p \psi$ and, hence, that ${\sf EA} \vdash \forall x\bgeq \num p\; \phi \preceq_x \psi$.

We reason in {\sf EA}. Suppose $x\geq \num p$ and $\gopr_{\psi,(x)}\chi$.
So, for some $\theta$, we have $ \theta \leq x$ and  $\psi \preceq_x \theta$ and $\opr_{\theta,(x)} \chi$.
We also have $\phi \preceq_x \psi$ and, hence, $\phi \preceq_x\theta$. It follows that
 $\gopr_{\phi,(x)}\chi$
\end{proof}
 
 \subsection{Uniform Smallness}
We use $\sigma$, $\sigma'$, \dots\ to range over (G\"odel numbers of) $\Sigma^0_1$-sentences. 
Let $\phi$ be any arithmetical sentence. 
We define \emph{uniform $\phi$-smallness} as follows:
\begin{itemize}
\item
$ \mf S_\phi(x) : \iff \forall \sigma\bleq x\, (\gopr_{\phi,(x)} \sigma \to {\sf True}_{\Sigma^0_1}(\sigma))$.
\end{itemize}

It is easily seen that, modulo $U$-provable equivalence, uniform $\phi$-smallness is $\Sigma^0_1$.
We already have this result over ${\sf EA}+\mathrm B\Sigma_1$.\footnote{We can tinker with the
definition of smallness to avoid the use of collection. See \cite{viss:abso21}.}
We will suppress the `uniform' in the rest of this paper, since we only consider the uniform case. 
  
Trivially, $\phi$-smallness is downwards closed.

If $\phi$ is consistent with $U$, then it is consistent with $U+\phi$ that not all numbers 
are small, since $U+\phi$ does not prove $\Sigma^0_1$-reflection for
$(U+\phi)$-provability. On the other hand, we have:

\begin{theorem}\label{kleinesmurf}
For every $n$, the theory $U+\phi$ proves that $n$ is $\phi$-small, i.e., 
$U+\phi\vdash \mf S_\phi(\underline n)$. Moreover, {\sf EA} verifies this insight, i.e.,
 ${\sf EA} \vdash \forall x\, \opr_\phi \mf S_\phi(x)$.
\end{theorem}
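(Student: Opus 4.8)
The plan is to read $\mf S_\phi(\underline n)$ as a fragment of $\Sigma^0_1$-reflection for a \emph{finite} subtheory of $U+\phi$ and then invoke (essential) reflexivity: first per numeral for the basic claim, and then in its uniform, {\sf EA}-verified form for the second claim. The pivotal observation is that bounded uniform provability invokes only finitely many axioms. Unwinding the definitions, if $\gopr_{\phi,(n)}\sigma$ holds then $\opr_{\chi,(n)}\sigma$ is a $U+\chi$-proof of code $\leq n$, and $\phi\preceq_n\chi$ witnesses $\chi$ from $\phi$ together with $U$-theorems having proofs $\leq n$; since ${\sf ass}(x,z)\to z<x$, every axiom that enters has code $\leq n$. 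Hence $\sigma$ is derivable in the finite theory $V_n$ consisting of $\phi$ together with the axioms of $U$ of code $\leq n$. I expect Theorems~\ref{mageresmurf} and~\ref{bravesmurf} to supply exactly the bookkeeping needed to make this ``only small axioms occur'' passage precise and, later, uniform in the bound.

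For the first claim I would argue as follows. Since $\sigma\leq n$ forces the relevant $\sigma$ to have complexity $\leq n$, the sentence $\mf S_\phi(\underline n)$ is an instance (restricted to $\sigma\leq n$) of $\Sigma^0_1$-reflection for $V_n$. Because $U$ extends {\sf PA}, the theory $U+\phi$ is reflexive, so it proves $\Sigma^0_1$-reflection for each of its finite subtheories; applying this to $V_n$ yields $U+\phi\vdash\mf S_\phi(\underline n)$. As an independent check one can bypass reflexivity for this non-uniform statement: for each standard $\sigma\leq n$, either $\gopr_{\phi,(n)}\sigma$ is a false bounded sentence, which $U\supseteq{\sf EA}$ refutes, or it is true, in which case a genuine $U+\phi$-proof of $\sigma$ exists, so $U+\phi\vdash\sigma$; using the Tarski biconditional $U\vdash\sigma\iff{\sf True}_{\Sigma^0_1}(\gnum\sigma)$ for the fixed $\sigma$ and conjoining over the finitely many $\sigma\leq n$ gives $U+\phi\vdash\mf S_\phi(\underline n)$.

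The second claim is the uniform, formalized version of the same idea, and this is where the care lies. Here I must \emph{not} prove $U+\phi\vdash\forall x\,\mf S_\phi(x)$, which is full $\Sigma^0_1$-reflection for $U+\phi$ and is unavailable; I must only have {\sf EA} verify, uniformly in $x$, the existence of a $U+\phi$-proof of the single sentence $\mf S_\phi(\dot x)$. The plan is to exhibit an elementary function $x\mapsto p_x$, with $p_x$ a $U+\phi$-proof of $\mf S_\phi(\dot x)$, by formalizing the reflexivity proof of the first claim with the numeral $n$ replaced by the variable $x$: the finite subtheory $V_x$ is produced elementarily from $x$, the ``small proofs use small axioms'' passage gives (uniformly in $x$) $\gopr_{\phi,(\dot x)}\sigma\to$ $V_{\dot x}$-provability of $\sigma$, and the \emph{provable} (uniform) form of essential reflexivity for {\sf PA}-extensions supplies $U+\phi$-proofs of $\Sigma^0_1$-reflection for $V_{\dot x}$ that {\sf EA} recognizes. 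Since $\mf S_\phi(\dot x)$ is, provably and uniformly in $x$, this reflection instance restricted to $\sigma\leq\dot x$, one obtains ${\sf EA}\vdash\forall x\,\opr_\phi\mf S_\phi(x)$.

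The main obstacle, and the step I would spend the most effort on, is this uniform identification—formalizable in {\sf EA}, with $x$ genuinely variable—of $\gopr_{\phi,(x)}$ with provability in $V_x$, together with the uniform form of essential reflexivity. Concretely, the $U+\phi$-proof of $\mf S_\phi(\dot x)$ must discharge the bounded quantifier $\forall\sigma\leq\dot x$ in a single proof via the partial truth predicate ${\sf True}_{\Sigma^0_1}$ and the reflection of $V_x$, and I must keep the dependence on $x$ \emph{inside} $\opr_\phi$ rather than inside $U+\phi$'s object-level reasoning—precisely the distinction that separates the (provable) uniform reflexivity ${\sf EA}\vdash\forall x\,\opr_\phi\mf S_\phi(x)$ from the (false) internalized reflection $U+\phi\vdash\forall x\,\mf S_\phi(x)$. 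Theorems~\ref{mageresmurf} and~\ref{bravesmurf} should make the ``small proofs use small axioms'' bookkeeping uniform in $x$, reducing the whole matter to the standard uniform reflexivity of essentially reflexive {\sf PA}-extensions.
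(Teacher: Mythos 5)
Your proposal is correct and takes essentially the same route as the paper: the paper converts bounded uniform provability into ordinary provability with a standardly bounded proof code (Theorem~\ref{mageresmurf}(2)) and then invokes \emph{small reflection}, which is precisely the reflection-for-bounded-fragments principle you reconstruct explicitly via the finite subtheories $V_n$ and the ({\sf EA}-verified, uniform) essential reflexivity of extensions of {\sf PA}. Your insistence on proving ${\sf EA}\vdash\forall x\,\opr_\phi\mf S_\phi(x)$ rather than the unavailable $U+\phi\vdash\forall x\,\mf S_\phi(x)$ is exactly the right reading of the paper's remark that the simple argument ``can clearly be verified in {\sf EA}''.
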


The principle articulated in the formalised part of the theorem is a typical example of an \emph{outside-big-inside-small principle}. 
Objects that may be  very big in the outer world are  small in the inner world.

\begin{proof}
Consider any number $n$. We work in {\sf PA}. Suppose $\sigma \leq \num n$ and $\gopr_{\phi, (\num n)}\sigma$.
Then $\sigma$ is standard (i.o.w., we can replace the bounded existential quantifier by a big disjunction),  and
 $\opr_{\phi, (F(\num n))}\sigma$. So, by small reflection, we have $\sigma$, and, hence, ${\sf True}_{\Sigma^0_1}(\sigma)$.

This simple argument can clearly be verified in {\sf EA}.
\end{proof}

\begin{theorem}\label{oppervlakkigesmurf}
\begin{enumerate}[1.]
\item
Suppose $U\vdash \phi \to \psi$. Then, $U +\phi \vdash \forall x\, ({\mf S}_\phi(x) \to {\mf S}_\psi(x))$. 
\item
Suppose $U\vdash \phi \iff \psi$. Then, ${\sf PA} +\phi \vdash \forall x\, ({\mf S}_\phi(x) \iff {\mf S}_\psi(x))$. 
\end{enumerate}

\smallskip\noindent
These results can be verified in {\sf EA}.
\end{theorem}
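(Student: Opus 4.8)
The plan is to prove both items by a dichotomy governed by the length of a $U$-proof of the hypothesis. Fix a code $p$ of a $U$-proof of $\phi\to\psi$ (resp.\ of $\phi\iff\psi$). For $x\bgeq\num p$ the desired relation between $\mf S_\phi$ and $\mf S_\psi$ will come straight from the uniformity Theorem~\ref{bravesmurf}, while for the finitely many standard $x\bles\num p$ I would argue separately, using the outside-big-inside-small Theorem~\ref{kleinesmurf} together with provable $\Sigma^0_1$-completeness. Since $\num p$ is a fixed numeral, the split $x\bgeq\num p$ versus $x\bles\num p$ is elementary, and each ingredient below is itself {\sf EA}-verifiable, which will yield the closing ``verified in {\sf EA}'' clause.

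For item~1, I reason in $U+\phi$, so that $\psi$ is available, and assume $\mf S_\phi(x)$. If $x\bgeq\num p$, take any $\sigma\bleq x$ with $\gopr_{\psi,(x)}\sigma$; by Theorem~\ref{bravesmurf}(i) this yields $\gopr_{\phi,(x)}\sigma$, whence $\mf S_\phi(x)$ delivers ${\sf True}_{\Sigma^0_1}(\sigma)$, and so $\mf S_\psi(x)$. If $x\bles\num p$, then $x$ is one of the standard numbers $\num 0,\dots,\num{p-1}$, and here I do not even need $\mf S_\phi(x)$: because $U+\phi\vdash\psi$, the theory $U+\phi$ extends $U+\psi$, so Theorem~\ref{kleinesmurf} (read with $\psi$ in place of $\phi$) gives $U+\phi\vdash\mf S_\psi(x)$ outright. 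Combining the two ranges yields $U+\phi\vdash\forall x\,(\mf S_\phi(x)\to\mf S_\psi(x))$.

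For item~2, the range $x\bgeq\num p$ is cleaner still: Theorem~\ref{bravesmurf}(ii) gives, already in {\sf EA}, that $\gopr_{\phi,(x)}\sigma\iff\gopr_{\psi,(x)}\sigma$ for every $\sigma\bleq x$, so the two smallness predicates literally coincide there, with no appeal to $\phi$, $\psi$, or even {\sf PA}. It remains to treat the standard $x=\num k$ with $k<p$. Here I would observe that, modulo {\sf EA}-provable equivalence, $\mf S_\phi(\num k)$ is the finite conjunction of those standard $\Sigma^0_1$-sentences $\sigma\bleq\num k$ for which $\gopr_{\phi,(\num k)}\sigma$ holds (a bounded, hence {\sf EA}-settled, condition), and likewise for $\psi$. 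The true conjuncts are {\sf EA}-provable by provable $\Sigma^0_1$-completeness and may be dropped, so each side reduces to the conjunction of its \emph{false} short-provable $\Sigma^0_1$-consequences, and the remaining task is to reconcile these two finite lists using $U\vdash\phi\iff\psi$.

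The hard part will be precisely this small-$x$ step of item~2, together with pinning down the base theory. The conversion of $\phi$- into $\psi$-uniform provability supplied by Theorem~\ref{bravesmurf} is only valid above the proof length $p$, so below $p$ one genuinely loses that bridge: the sets $\{\sigma\bleq\num k:\gopr_{\phi,(\num k)}\sigma\}$ and $\{\sigma\bleq\num k:\gopr_{\psi,(\num k)}\sigma\}$ can differ, since exchanging $\phi$ for $\psi$ as a premise costs the length of the equivalence proof. Care is therefore needed to see that any such mismatch involves only sentences whose truth value is already fixed over the weak base---the true ones by $\Sigma^0_1$-completeness, the remainder by the common deductive closure $U+\phi=U+\psi$---so that the equivalence is provable without climbing back up to the full strength of $U$. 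Finally I would check that forming $p$ from a given proof, performing the case split, and invoking the three auxiliary theorems are all elementary, securing the {\sf EA}-verifiability asserted at the end.
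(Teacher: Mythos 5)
Your item~1 and the $x\bgeq\num p$ half of item~2 are correct and match the paper's own proof step for step. The genuine gap is the case $x\bles\num p$ of item~2, which you explicitly leave unresolved (``the hard part''). The fix is not the list-reconciliation you sketch; it is the same trivial move you already made in item~1. Reason in $U+\phi$: since $U\vdash\phi\iff\psi$, you have both $\phi$ and $\psi$ available there, so Theorem~\ref{kleinesmurf} applies \emph{twice}: $U+\phi\vdash\mf S_\phi(\num k)$ directly, and, since $U+\phi$ deductively extends $U+\psi$, also $U+\phi\vdash\mf S_\psi(\num k)$, for each standard $k<p$. Both sides of the biconditional are outright provable below $\num p$, so the equivalence there is trivial and no comparison of the two uniform-provability sets is ever needed. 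This is exactly what the paper means by ``Case (2) is similar'': its proof of (2), like that of (1), is carried out in $U+\phi$ (in the paper's main case $U={\sf PA}$ this is literally ${\sf PA}+\phi$; for proper extensions $U$ of {\sf PA}, the base the proof actually delivers, and the only one the later applications in Theorem~\ref{extensionalitysmurf} use, is $U+\phi$).

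Your stated goal for the small-$x$ step---proving the equivalence ``without climbing back up to the full strength of $U$''---is what derails you, and it cannot be met in general. Below $\num p$ the sets $\{\sigma\bleq\num k:\gopr_{\phi,(\num k)}\sigma\}$ and $\{\sigma\bleq\num k:\gopr_{\psi,(\num k)}\sigma\}$ can genuinely differ, and when $U$ is a $\Sigma^0_1$-unsound extension of {\sf PA} the false sentences in the mismatch are, over {\sf EA} or over ${\sf PA}+\phi$ alone, just some undecided $\Sigma^0_1$-sentences; nothing weak fixes their ``truth value''. The only thing that settles them is that $U+\phi=U+\psi$ proves them---which is precisely the full strength you are trying to avoid, so your closing sentence undercuts itself: once you invoke the common deductive closure, you are running (a roundabout, finite-conjunction version of) the paper's argument via Theorem~\ref{kleinesmurf} anyway. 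Drop the weak-base ambition, read the conclusion over $U+\phi$ as the paper does, and your proof closes immediately; the {\sf EA}-verifiability clause then follows as you indicate, since Theorem~\ref{kleinesmurf} is itself {\sf EA}-verified in the form ${\sf EA}\vdash\forall x\,\opr_\phi\mf S_\phi(x)$ and the case split at the fixed numeral $\num p$ is elementary.
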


\begin{proof}
Ad (1): Suppose $U\vdash \phi \to \psi$. Let $p$ a code of a
  $U$-proof of $( \phi \to \psi)$. 
  We reason in $U+\phi$. 
  
  Suppose $x < \num p$.
In that case, we have both  ${\mf S}_\phi(x)$ and ${\mf S}_\psi(x)$, since $x$ is standard and we have both $\phi$ and $\psi$ and, thus,
we may apply Theorem~\ref{kleinesmurf}.

Suppose $x \geq \num p$. In this case, we are immediately done  by Theorem~\ref{bravesmurf}.

Case (2) is similar. 
\end{proof}

\subsection{Uniform Slow Provability}
This section gives our main argument. The argument is an adaptation of the argument in
\cite{viss:abso21}.

We define the \emph{uniform slow $(U+\phi)$-provability} of $\psi$ or $\apr_\phi \psi$ as: $\psi$ is provable from $\phi$-small $(U+\phi)$-axioms. 
We give the formal definition.
\begin{itemize}
\item
$\opr_{\phi,x}\psi$ iff there is a proof of $\psi$ from $(U+\phi)$-axioms $\leq x$.
\item
$ \apr_\phi \psi :\iff \exists x\, (\opr_{\phi,x}\psi \wedge {\mf S}_\phi(x))$.
\end{itemize}

We note that, in case $\phi$ is consistent with $U$ the internally defined set of small axioms numerates the $(U+\phi)$-axioms over $U+\phi$.
Let \[\beta(x) := ((\alpha(x)\vee x=\gnum\phi) \wedge \mf S_\phi(x)).\] Then, $\apr_\phi \chi$ is $\opr_{\tupel\beta}\chi$. 
Thus, $\apr_\phi$ is a Fefermanian predicate based on an enumeration of the axioms of
$U+\phi$. 

We see that the  axiom set is $\Sigma^0_1$, so, modulo $U$-provable equivalence or even modulo $({\sf EA}+\mathrm B\Sigma_1)$-provable equivalence, 
$\apr_\phi$ is $\Sigma_1^0$. 
Here   ${\sf EA}+\mathrm B\Sigma_1$ is Elementary Arithmetic plus $\Sigma^0_1$-collection.

\begin{remark}
\emph{Caveat emptor:}
 $\apr_\phi\psi$ is \emph{not} the same as $\apr_\top(\phi \to \psi)$.
 See Appendix~\ref{gastsmurf}.
 \end{remark}

\begin{theorem}\label{loebsmurf}
$\apr_\phi$ satisfies the L\"ob Conditions over $U+\phi$.
This result can be verified in {\sf EA}.
\end{theorem}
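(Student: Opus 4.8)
The plan is to verify the three Hilbert--Bernays--Löb derivability conditions for $\apr_\phi$, exploiting the fact recorded just above that $\apr_\phi$ is the Fefermanian predicate $\opr_{\tupel\beta}$ attached to the $\Sigma^0_1$-axiomatisation $\beta(z):=((\alpha(z)\vee z=\gnum\phi)\wedge\mf S_\phi(z))$ of $U+\phi$. The one genuinely non-generic ingredient is the outside-big-inside-small principle, Theorem~\ref{kleinesmurf}: it guarantees that the finitely many axioms carrying any fixed elementary argument are provably $\phi$-small, so that for such arguments slow provability is no weaker than ordinary provability. All the provability facts below are elementary manipulations of proofs together with formalised $\Sigma^0_1$-completeness, so each is available in {\sf EA}; I would check the facts and leave their routine {\sf EA}-formalisation implicit.

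For necessitation, suppose $U+\phi\vdash\psi$ and fix a proof $p$ of $\psi$. Its assumptions $a_1,\dots,a_k$ are standard-coded axioms of $U+\phi$, so $U+\phi\vdash\alpha(\gnum{a_i})\vee\gnum{a_i}=\gnum\phi$, while Theorem~\ref{kleinesmurf} gives $U+\phi\vdash\mf S_\phi(\gnum{a_i})$ because each $\gnum{a_i}$ is a numeral; hence $U+\phi\vdash\beta(\gnum{a_i})$. Since ${\sf proof}(\num p,\gnum\psi)$ and the assumption-list of $\num p$ are true $\Delta_0$-facts, unfolding the bounded assumption-quantifier into a finite conjunction yields $U+\phi\vdash\opr_{\tupel\beta}\psi$, that is $U+\phi\vdash\apr_\phi\psi$. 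For distribution, the concatenation of a $\beta$-proof of $\psi\to\chi$ with a $\beta$-proof of $\psi$, followed by one modus ponens step, is again a proof all of whose assumptions satisfy $\beta$; this proof-combination argument is insensitive to the particular choice of $\beta$ and gives $U+\phi\vdash\apr_\phi(\psi\to\chi)\to(\apr_\phi\psi\to\apr_\phi\chi)$.

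The real work is the third condition, $U+\phi\vdash\apr_\phi\psi\to\apr_\phi\apr_\phi\psi$. As $\apr_\phi\psi$ is, modulo $U+\phi$-provable equivalence, a $\Sigma^0_1$-sentence, and as $\apr_\phi$ respects provable equivalence by the first two conditions, it suffices to prove provable $\Sigma^0_1$-completeness for slow provability itself, namely $U+\phi\vdash\theta\to\apr_\phi\theta$ for every $\Sigma^0_1$-sentence $\theta$; instantiating at the $\Sigma^0_1$-form of $\apr_\phi\psi$ then delivers the condition. Reasoning inside $U+\phi$, I would assume the true $\Sigma^0_1$-sentence $\theta$: by formalised $\Sigma^0_1$-completeness, $\theta$ has a proof whose non-logical axioms all lie in one fixed, finite, standard-coded base $A_0$ of defining and basic axioms, the proof growing with the witness for $\theta$ but the set $A_0$ it invokes not. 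This is exactly the delicate point, and the main obstacle: one cannot simply declare that every axiom used is small, since $U+\phi$ refutes the blanket statement $\forall x\,\mf S_\phi(x)$, and a non-standard witness might otherwise seem to require axioms of non-standard, non-small code. What rescues the argument is that $A_0$ is standard no matter how large the witness is, so Theorem~\ref{kleinesmurf} applies to each of its finitely many standard-coded members to give $U+\phi\vdash\mf S_\phi(\gnum a)$, hence $\beta(\gnum a)$; the proof of $\theta$ is therefore a $\beta$-proof, so $\opr_{\tupel\beta}\theta$, i.e. $\apr_\phi\theta$, holds. Since Theorem~\ref{kleinesmurf} and formalised $\Sigma^0_1$-completeness are themselves {\sf EA}-verifiable, so is the whole verification.
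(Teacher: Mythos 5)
Your proof is correct and follows essentially the same route as the paper: necessitation via the fact that standard axioms are provably $\phi$-small (Theorem~\ref{kleinesmurf}), distribution by proof concatenation, and the third condition by exploiting that $\apr_\phi\psi$ is $\Sigma^0_1$ modulo $(U+\phi)$-provable equivalence together with provable $\Sigma^0_1$-completeness for $\apr_\phi$. The only cosmetic difference is that the paper compresses the $\Sigma^0_1$-completeness step into the remark that necessitation puts {\sf EA} inside $\apr_\phi$, whereas you unfold the same fact explicitly via the fixed standard axiom base of the formalised completeness proof.
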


\begin{proof}
All standard axioms of $U+\phi$ are $(U+\phi)$-provably small.
It follows that we have L\"ob's Rule, aka Necessitation.

We note that, by Necessitation, we have {\sf EA} inside $\apr_\phi$ according to 
$U+\phi$.

It is immediate that $U+\phi \vdash (\apr_\phi\psi \wedge \apr_\phi (\psi\to\chi)) \to \apr_\phi\chi$.

Clearly, $U+\phi$ proves that $\apr_\phi\psi$ is equivalent to a $\Sigma^0_1$-sentence, say $\sigma$.
Moreover, by necessitation, we have $U+\phi \vdash \apr_\phi(\apr_\phi\psi \iff \sigma)$. 
It follows that:\qedright
\begin{eqnarray*}
U+ \phi \vdash \apr_\phi\psi & \to & \sigma\\
& \to & \apr_\phi \sigma \\
& \to & \apr_\phi \apr_\phi \psi
\end{eqnarray*}
 \end{proof}
 
 \begin{remark}\label{hacksmurf}
Suppose we arrange that every proof is larger than an induction axiom that implies ${\sf EA}+\mathrm B\Sigma_1$.
Then, the axiom-set for $\apr_\phi$ will ${\sf EA}+\mathrm B\Sigma_1$-verifiably contain ${\sf EA}+\mathrm B\Sigma_1$.

So, we will have L\"ob's Rule, aka Necessitation,  over ${\sf EA}+\mathrm B\Sigma_1$. Note that we do not need to have L\"ob's Rule
over $U$ itself.

Also, the axiom set will be $\Sigma^0_1$ modulo provable equivalence in the $\apr_\phi$-theory. This will give us full L\"ob's logic
over ${\sf EA}+\mathrm B\Sigma_1$.
\end{remark}

\begin{theorem}\label{extensionalitysmurf}
Suppose $U \vdash \phi \iff \psi$. Then, $U+\phi \vdash \forall \chi\, (\apr_{\phi}\chi \iff \apr_\psi \chi)$.
This result can be verified in {\sf EA}.
\end{theorem}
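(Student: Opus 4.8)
The plan is to argue by direct axiom substitution, reducing the equivalence of $\apr_\phi$ and $\apr_\psi$ to the already-established equivalence of the associated smallness predicates. Recall that $\apr_\phi\chi$ unfolds to $\exists x\,(\opr_{\phi,x}\chi \wedge \mf S_\phi(x))$, where $\opr_{\phi,x}\chi$ asserts the existence of a proof of $\chi$ from $(U+\phi)$-axioms $\leq x$. Since the two statements are symmetric in $\phi$ and $\psi$, I would prove only $U+\phi \vdash \apr_\phi\chi \to \apr_\psi\chi$ for arbitrary $\chi$; the converse is obtained by interchanging the roles of $\phi$ and $\psi$, using $U\vdash \psi\iff\phi$.

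First I would fix, as a standard parameter, a code $p$ of a $U$-proof of $\phi\iff\psi$, and let $\num c$ be a standard numeral bounding $\num p$, $\gnum\phi$, and $\gnum\psi$. Reasoning inside $U+\phi$, suppose $\opr_{\phi,x}\chi$ and $\mf S_\phi(x)$ witness $\apr_\phi\chi$. Given a $(U+\phi)$-proof of $\chi$ from axioms $\leq x$, I replace every appeal to the axiom $\phi$ by a sub-derivation of $\phi$ from $\psi$ extracted from the fixed $U$-proof of $\psi\to\phi$. The resulting object is a $(U+\psi)$-proof of $\chi$ whose axioms are the old $\alpha$-axioms $\leq x$ together with $\psi$ and the finitely many $\alpha$-axioms of the substituted sub-derivation, all of which have codes below $\num c$. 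Hence it is a proof from $(U+\psi)$-axioms $\leq \max(x,\num c)$, giving $\opr_{\psi,\max(x,\num c)}\chi$.

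It remains to supply smallness of the new bound $\max(x,\num c)$. Here I would split on whether $x \geq \num c$. If $x \geq \num c$, the bound is unchanged and Theorem~\ref{oppervlakkigesmurf}(2) converts $\mf S_\phi(x)$ into $\mf S_\psi(x)$. If $x < \num c$, then $x$ is standard and the new bound equals $\num c$; by Theorem~\ref{kleinesmurf}, $U+\psi$ (equivalently $U+\phi$) proves $\mf S_\psi(\num c)$. In either case $\mf S_\psi(\max(x,\num c))$ holds, so $\apr_\psi\chi$ follows. The whole argument is a fixed, elementary manipulation of proof codes together with a finite case split on the standard parameter $\num c$, so it is verifiable in {\sf EA}, as claimed.

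The step I expect to be the main obstacle is precisely the mismatch between the axiom-bound and the smallness predicate: substituting the axiom $\phi$ by a derivation from $\psi$ enlarges the axiom-bound, yet smallness is only downward closed, so $\mf S_\phi(x)$ does not by itself certify smallness of the larger bound. The observation that dissolves this is that the enlargement is by a \emph{standard} amount $\num c$, which lets the outside-big-inside-small principle of Theorem~\ref{kleinesmurf} cover the low range $x<\num c$ while Theorem~\ref{oppervlakkigesmurf}(2) covers the range $x\geq\num c$. I would also remark that one could instead route the argument through the uniform-provability transfer of Theorem~\ref{bravesmurf}, but the direct substitution above keeps the bookkeeping on bounds fully explicit and hence easiest to verify in {\sf EA}.
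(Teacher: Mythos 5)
Your proposal is correct and takes essentially the same route as the paper: both arguments transform the witnessing proof by replacing the axiom $\phi$ with a derivation of $\phi$ from $\psi$ plus standard (hence, by Theorem~\ref{kleinesmurf}, small) axioms, and then transfer smallness of the remaining axioms from $\phi$ to $\psi$ via Theorem~\ref{oppervlakkigesmurf}. The only difference is presentational: the paper obtains the replacement derivation from the internal facts $\apr_\psi\psi$ and $\apr_\psi(\phi\iff\psi)$, hence $\apr_\psi\phi$, whereas you do the equivalent bound-bookkeeping explicitly with $\max(x,\num c)$ and a case split.
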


We will assume that codes of formulas occurring in a proof are smaller than the code of that proof.

\begin{proof}
Suppose $p$ is a code of a $U$-proof of  $ \phi \iff \psi$. 
We reason in $U+\phi$. We also will have $\psi$.
Consider any $\chi$ and suppose $q$ witnesses that $\apr_\phi\chi$. Thus, $q$ is  a proof of $\chi$ from $\phi$ plus $\phi$-small $U$-axioms. 

 We note that $\psi$ itself is a $\psi$-small axiom, since it is standard.
Moreover, all the $U$-axioms used in $\num p$ are standard, since $\num p$ is.
So $\apr_\psi\psi$ and $\apr_\psi (\phi\iff \psi)$.  Hence,  $\apr_\psi\phi$. Say this is witnessed by $r$.
Thus, we can transform $q$ to a witness $q'$ of $\apr_\psi\chi$, by replacing $\phi$ as an axiom by the
proof $r$. We note that $\phi$-small $U$-axioms used in $q$ are also $\psi$-small  by Theorem~\ref{oppervlakkigesmurf}.
 It follows that $\apr_\psi\chi$.

The other direction is similar.
\end{proof}

\begin{theorem}[Emission]\label{emission}
${\sf EA}\vdash \opr_\phi \psi \to \opr_\phi \apr_\phi \psi$.  
\end{theorem}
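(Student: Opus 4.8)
The plan is to unwind $\apr_\phi\psi$ to its defining existential $\exists x\,(\opr_{\phi,x}\psi \wedge \mf S_\phi(x))$ and to manufacture, inside $U+\phi$, a single witness $x$ for it out of any given $(U+\phi)$-proof of $\psi$. I would reason throughout in {\sf EA} and assume $\opr_\phi\psi$, so that we are handed a proof $q$ of $\psi$ from $(U+\phi)$-axioms. By the convention that every axiom-code occurring in a proof is smaller than the code of that proof, all axioms used in $q$ are $<q$; hence $q$ itself witnesses $\opr_{\phi,q}\psi$. So from $\opr_\phi\psi$ we obtain, {\sf EA}-provably, a number $q$ with $\opr_{\phi,q}\psi$.

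The two facts to combine at $x=q$ are then: (a) $\opr_\phi\opr_{\phi,q}\psi$, and (b) $\opr_\phi\mf S_\phi(q)$. For (a) I would invoke verifiable $\Sigma^0_1$-completeness: $\opr_{\phi,x}\psi$ is $\Sigma^0_1$ in $x$ (it asserts the existence of a suitably axiom-bounded proof), so {\sf EA} proves $\opr_{\phi,q}\psi \to \opr_\phi\opr_{\phi,q}\psi$, and we have already secured the antecedent. Fact (b) is exactly the formalised outside-big-inside-small principle of Theorem~\ref{kleinesmurf}, namely ${\sf EA}\vdash\forall x\,\opr_\phi\mf S_\phi(x)$, instantiated at $x=q$.

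Finally I would pull (a) and (b) together under the box. Since $\opr_\phi$ is {\sf EA}-provably closed under conjunction introduction and under formalised modus ponens, (a) and (b) yield $\opr_\phi(\opr_{\phi,q}\psi \wedge \mf S_\phi(q))$; and the implication $(\opr_{\phi,q}\psi \wedge \mf S_\phi(q)) \to \apr_\phi\psi$ is a logical validity (mere existential generalisation on the defining formula of $\apr_\phi\psi$, with $q$ as witness), hence {\sf EA}-provably boxed. Distributivity of $\opr_\phi$ over this implication then transports the conjunction to $\opr_\phi\apr_\phi\psi$. Discharging the assumption $\opr_\phi\psi$ gives the theorem, and since every step is an {\sf EA}-verifiable manipulation of codes, the whole derivation lives in {\sf EA}.

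The step I expect to carry the weight is (b): it is precisely here that the gap between the outer and inner worlds is exploited. The witness $q$ comes from an \emph{external} proof and may be arbitrarily large, yet Theorem~\ref{kleinesmurf} guarantees that $U+\phi$ nevertheless proves $q$ small, so that this enormous bound still certifies slow provability from the inside. The only genuine care needed is the bookkeeping of numerals versus free variables when applying verifiable $\Sigma^0_1$-completeness and Theorem~\ref{kleinesmurf} uniformly in $q$; this is routine but must be done honestly to keep the argument inside {\sf EA}.
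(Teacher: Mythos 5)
Your proof is correct and follows essentially the same route as the paper's: extract a bound $x$ from the given proof, internalise $\opr_{\phi,x}\psi$ by verifiable $\Sigma^0_1$-completeness, invoke the formalised outside-big-inside-small principle (Theorem~\ref{kleinesmurf}) for $\opr_\phi\mf S_\phi(x)$, and combine the two under the box to witness $\apr_\phi\psi$. You merely spell out details the paper leaves implicit, such as why the proof code itself serves as the bound and why the completeness step applies.
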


\begin{proof}
We reason in {\sf EA}. Suppose $\opr_\phi\psi$. 
Then, clearly, for some $x$, we have $\opr_{\phi,x} \psi$.
Hence, $\opr_\phi \opr_{\phi,x}\psi$. Also, Theorem~\ref{kleinesmurf} gives us $\opr_\phi\mf S_\phi(x)$.
So, $\opr_\phi (\opr_{\phi,x}\psi \wedge \mf S_\phi(x))$ and, thus, $\opr_\phi\apr_\phi \psi$.
\end{proof}

 \begin{theorem}[Absorption]\label{absorption}
${\sf EA}  \vdash \opr_\phi\apr_\phi \psi \to \opr_\phi \psi$.  
 \end{theorem}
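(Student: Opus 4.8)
The plan is to establish the implication as a transformation of proofs, dual to Emission (Theorem~\ref{emission}). Reasoning in {\sf EA}, I would assume $\opr_\phi\apr_\phi\psi$, so that $U+\phi$ proves $\apr_\phi\psi$, and aim to manufacture a genuine $(U+\phi)$-proof of $\psi$. First I would record the one cheap ingredient: dropping the smallness conjunct in the definition of $\apr_\phi$ gives ${\sf EA}\vdash \apr_\phi\psi \to \opr_\phi\psi$, so by necessitation and distributivity for $\opr_\phi$ the hypothesis already yields $\opr_\phi\opr_\phi\psi$. This is strictly weaker than the goal, since passing from $\opr_\phi\opr_\phi\psi$ to $\opr_\phi\psi$ amounts to removing a box, i.e.\ a reflection step, which is not available in general. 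Hence the smallness constraint, and not just soundness, must be used essentially.

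The core of the argument should mirror the proof of Emission but run in the opposite direction, and it must exploit that the hypothesis is \emph{provability} of $\apr_\phi\psi$ rather than its mere truth. Concretely, I would fix the standard $(U+\phi)$-proof $d$ witnessing $\opr_\phi\apr_\phi\psi$ and use the outside-big-inside-small principle (Theorem~\ref{kleinesmurf}), together with the essential reflexivity of $U$ --- both available {\sf EA}-verifiably --- to reflect the small slow-proof guaranteed by $\apr_\phi\psi$ into an honest proof of $\psi$. The leverage is that a slow proof draws only on $\phi$-small axioms, and reflexivity supplies reflection for the bounded fragments cut out along the smallness cut $\verz{x : \mf S_\phi(x)}$, in the restricted-reflection spirit of \cite{viss:abso21}. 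Combined with the monotonicity of smallness (Theorem~\ref{oppervlakkigesmurf}) and the L\"ob conditions of Theorem~\ref{loebsmurf}, this reflection should then collapse the double box to $\opr_\phi\psi$.

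The hard part is exactly this reflection step. The witnessing small axiom bound need not be standard, and one cannot reflect it \emph{inside} $U+\phi$: an internal implication $\apr_\phi\psi \to \psi$ would, at $\psi = \bot$, yield $U+\phi \vdash \neg\, \apr_\phi\bot$, i.e.\ the slow consistency of $U+\phi$, contradicting the Second Incompleteness Theorem for $\apr_\phi$ furnished by Theorem~\ref{loebsmurf}. So the reflection has to be performed at the level of provability, trading on the fact that $d$ is a genuinely standard proof while the slow bound is only internally finite. Making this descent precise --- showing that provability of $\apr_\phi\psi$, as opposed to its truth in some model, forces the small witness to be reflectable --- is the delicate heart of the matter, and it is here that the apparatus of \cite{viss:abso21}, via the uniform-provability comparisons of Theorems~\ref{mageresmurf} and~\ref{bravesmurf}, does the real work.
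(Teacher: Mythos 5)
Your setup is right and your diagnosis of the obstacles is accurate --- dropping the smallness conjunct only yields $\opr_\phi\opr_\phi\psi$, and internal reflection $\apr_\phi\psi\to\psi$ is indeed barred by the Second Incompleteness Theorem for $\apr_\phi$ --- but the proposal stops exactly where the proof has to begin. The step you call ``the delicate heart of the matter'' is deferred to ``the apparatus of \cite{viss:abso21}'' without naming the mechanism, and the mechanism is not Theorem~\ref{mageresmurf} or Theorem~\ref{bravesmurf}: it is a Shepherdson/FGH witness-comparison fixed point, which your sketch never introduces. The paper takes $\nu$ with ${\sf EA}\vdash \nu \iff (\exists x\,\opr_{\phi,x}\psi) < \opr_\phi\nu$, and $\nu$ is $\Sigma^0_1$. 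Your direct-reflection plan cannot work as stated, because the only reflection that smallness buys is $\Sigma^0_1$-reflection for uniform proofs bounded by the small number $x$, while $\psi$ carries no complexity bound; one must route the argument through a $\Sigma^0_1$ surrogate, and $\nu$ is that surrogate. Reasoning inside $\opr_\phi$: from $\apr_\phi\psi$ one gets $x$ with $\opr_{\phi,x}\psi$ and $\mf S_\phi(x)$; if there is no proof of $\nu$ of code $\leq x$, the fixed-point equation yields $\nu$ outright, and if there is one, then smallness of $x$ reflects this $\Sigma^0_1$ sentence and again yields $\nu$. Either way, $\opr_\phi\nu$.

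The second thing the sketch misses is how the ``descent'' to a reflectable bound actually happens, and it is not by contrasting a ``genuinely standard'' proof $d$ with an ``internally finite'' slow bound --- that distinction is not available inside {\sf EA}, where the whole implication is being proved. Rather: from $\opr_\phi\nu$ one extracts, within {\sf EA}, a number $p$ with $\opr_\phi\opr_{\phi,(p)}\nu$, while necessitating the fixed-point equation gives $\opr_\phi((\exists x\,\opr_{\phi,x}\psi) < \opr_\phi\nu)$; combining the two inside the box bounds the witness of $\exists x\,\opr_{\phi,x}\psi$ by $p$, so $\opr_\phi\opr_{\phi,p}\psi$. Only now, with the bound $p$ living at the {\sf EA} level rather than under the box, does essential reflexivity of $U+\phi$ apply and delete the inner box. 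So your instinct that reflexivity plus smallness must finish the job is correct, but without the witness-comparison fixed point there is no statement to which either can be applied; that fixed point is the missing idea, not a detail to be filled in.
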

 
 The proof turns out to be remarkably simple. 
 
 \begin{proof}
We find $\nu$ such that ${\sf EA} \vdash \nu \iff  (\exists x \, \opr_{\phi,x} \psi ) < \opr_\phi \nu$.
 We note that  $\nu$ is $\Sigma^0_1$.

We reason in {\sf EA}. 
Suppose $\opr_\phi\apr_\phi \psi$. We prove $\opr_\phi \psi$.

We reason inside $\opr_\phi$. We have  $\apr_\phi \psi$.
 So, for some $x$, (i) $\opr_{\phi,x} \psi$ and (ii) $x$ is $\phi$-small.
In case not $\opr_{\phi,(x)} \nu$, by (i) and the fixed point equation, we find $\nu$. 
If we do have $\opr_{\phi,(x)} \nu$, we find $\nu$ by (ii)
and the fact that $\phi$-small proofs are $\Sigma^0_1$-reflecting
(by Theorem~\ref{mageresmurf}(1)).
 We leave the $\opr_\phi$-environment.
We have shown $\opr_\phi \nu$. 

It follows, (a) that for some $p$, we have $\opr_\phi \opr_{\phi,(p)} \nu$ and, by the fixed point equation 
for $\nu$, (b) $\opr_\phi ((\exists x\, \opr_{\phi,x} \psi) < \opr_\phi \nu)$. 
Combining (a) and (b), we find $\opr_\phi\opr_{\phi,p} \psi$, and, thus,
since $U+\phi$ is, {\sf EA}-verifiably, essentially reflexive, we obtain  $\opr_{\phi} \psi$, as desired. 
\end{proof}

\noindent
We provide some notes on the proof of Theorem~\ref{absorption} in Appendix~\ref{emisabso}.

\begin{theorem}\label{maintheorem}
We have:
\begin{description}
\item[$\Pi^0_1$-Flexibility] If $U+ \phi$ is consistent, then, for all $\Pi^0_1$-sentences $\pi$, we have $U+ \phi+ (\aco_\phi \top \iff \pi)$ 
is consistent.
\item[Conditional Extensionality] If ${\sf PA} \vdash \phi\iff \psi$, then  $ {\sf PA}+ \phi \vdash \aco_\phi\top \iff \aco_\psi\top$.
\end{description}

\smallskip\noindent
These results can be verified in {\sf EA}.
\end{theorem}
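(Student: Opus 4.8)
The plan is to handle the two clauses separately, disposing of Conditional Extensionality at once and concentrating the effort on $\Pi^0_1$-Flexibility. For Conditional Extensionality I would specialise Theorem~\ref{extensionalitysmurf}: since $\aco_\phi\top$ abbreviates $\neg\,\apr_\phi\bot$, instantiating the universally quantified $\chi$ in $\forall\chi\,(\apr_\phi\chi\iff\apr_\psi\chi)$ at $\chi:=\bot$ and negating yields $\aco_\phi\top\iff\aco_\psi\top$. To obtain this over ${\sf PA}+\phi$ rather than $U+\phi$, I would rerun the proof of Theorem~\ref{extensionalitysmurf} with the ${\sf PA}$-version of smallness-extensionality, Theorem~\ref{oppervlakkigesmurf}(2), in place of its $U$-version; the remainder of that proof concerns only standard proofs and standard axioms and so is available over ${\sf PA}+\phi$. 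The {\sf EA}-verifiability is inherited.

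For $\Pi^0_1$-Flexibility, first observe that $\aco_\phi\top=\neg\,\apr_\phi\bot$ is $\Pi^0_1$ modulo provable equivalence, as $\apr_\phi\bot$ is $\Sigma^0_1$. I would then mimic the passage from Theorem~\ref{smurfigesmurf} to its corollary and reduce flexibility to a single non-degeneracy claim. If flexibility failed for some $\Pi^0_1$-sentence $\pi$, then $U+\phi\vdash\neg\,(\aco_\phi\top\iff\pi)$; writing $\sigma:=\neg\,\pi\in\Sigma^0_1$ this reads $U+\phi\vdash\apr_\phi\bot\iff\neg\,\sigma$. So it suffices to prove (a) the slow analogue of Theorem~\ref{smurfigesmurf}, namely that $U+\phi\vdash\apr_\phi\bot\iff\neg\,\sigma$ implies $U+\phi\vdash\apr_\phi\bot$; and (b) the non-degeneracy $U+\phi\nvdash\apr_\phi\bot$. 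Given (a), failure of flexibility forces $U+\phi\vdash\apr_\phi\bot$, contradicting (b). The two extremes are visibly covered: $\pi:=\bot$ is the consistency of $U+\phi+\apr_\phi\bot$, i.e.\ G\"odel~2 for $\apr_\phi$ from the L\"ob Conditions (Theorem~\ref{loebsmurf}), while $\pi:=\top$ is (b) itself.

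Claim (a) I would prove by the L\"ob-theoretic calculation underlying Theorem~\ref{smurfigesmurf}, now licensed because $\apr_\phi$ satisfies the L\"ob Conditions over $U+\phi$ (Theorem~\ref{loebsmurf}). Provable $\Sigma^0_1$-completeness of $\apr_\phi$ gives $U+\phi\vdash\sigma\to\apr_\phi\sigma$; necessitating and distributing over the hypothesis $\sigma\to\aco_\phi\top$ gives $U+\phi\vdash\apr_\phi\sigma\to\apr_\phi\aco_\phi\top$; and the formalised Second Incompleteness Theorem in the form $\apr_\phi\aco_\phi\top\to\apr_\phi\bot$ (L\"ob's Theorem for $\apr_\phi$ at $\bot$) chains these into $U+\phi\vdash\sigma\to\apr_\phi\bot$. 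As the hypothesis also yields $\apr_\phi\bot\to\neg\,\sigma$, we get $U+\phi\vdash\neg\,\sigma$ and hence $U+\phi\vdash\apr_\phi\bot$, as required. This calculation is {\sf EA}-verifiable since Theorem~\ref{loebsmurf} is.

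The hard part is (b), the consistency of slow consistency: $U+\phi\nvdash\apr_\phi\bot$ must hold even for $U+\phi$ that prove their own ordinary inconsistency (say $U={\sf PA}$, $\phi:=\opr\bot$). Here box-manipulation alone is useless, since from $U+\phi\vdash\apr_\phi\bot$ one extracts only $U+\phi\vdash\opr_\phi\bot$ (a slow proof is a proof, and via necessitation and Absorption one merely recovers the same), which is harmless for unsound theories. The substance must come from smallness: no genuine $\phi$-small proof of $\bot$ can exist, because by the outside-big-inside-small principle (Theorem~\ref{kleinesmurf}) together with Emission and Absorption (Theorems~\ref{emission} and~\ref{absorption}) any inconsistency the theory believes in is carried by non-small, non-standard resources. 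This is the point at which I would import and adapt the slow-provability construction of \cite{viss:abso21} establishing that slow consistency is consistent, and I expect it to be the principal obstacle; its {\sf EA}-verifiable formalisation then yields the verifiability clause of the theorem.
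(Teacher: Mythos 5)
Your treatment of Conditional Extensionality (Theorem~\ref{extensionalitysmurf} at $\chi := \bot$) and your reduction of $\Pi^0_1$-Flexibility to (a) the L\"ob-style lemma plus (b) the non-degeneracy $U+\phi \nvdash \apr_\phi\bot$ match the paper's route; your claim (a) is an internalised rerun of the argument for Theorem~\ref{smurfigesmurf}, whereas the paper simply applies Theorem~\ref{smurfigesmurf} to the Fefermanian predicate $\apr_\phi = \opr_{\tupel\beta}$ (its small axiom set semi-represents the axioms of $U+\phi$ by Theorem~\ref{kleinesmurf}), and these come to the same thing.

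The genuine gap is in (b), which you leave unproven, declaring it ``the principal obstacle'' and deferring to an importation of the construction of \cite{viss:abso21}. But that importation has already been carried out in the paper: it \emph{is} Theorem~\ref{absorption}, which you cite without using correctly. Your argument that box-manipulation is useless here --- that from $U+\phi\vdash\apr_\phi\bot$ one only extracts $U+\phi\vdash\opr_\phi\bot$ --- comes from applying Absorption \emph{internally} (under a box, via necessitation). Apply it \emph{externally} instead: if $U+\phi\vdash\apr_\phi\bot$, then $\opr_\phi\apr_\phi\bot$ is \emph{true} in the standard model; since ${\sf EA}\vdash \opr_\phi\apr_\phi\bot \to \opr_\phi\bot$ (Theorem~\ref{absorption}) and {\sf EA} is sound, $\opr_\phi\bot$ is true, i.e.\ $U+\phi$ is genuinely inconsistent --- not merely provably-inconsistent-according-to-itself. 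Contrapositively, consistency of $U+\phi$ yields $U+\phi\nvdash\apr_\phi\bot$ in one line; this is what the paper calls the meta-version of Theorem~\ref{absorption}. (Your instinct that the substance must come from smallness is right, but that substance is already packaged in the proof of Theorem~\ref{absorption}, where $\phi$-smallness licenses $\Sigma^0_1$-reflection on small proofs.) The {\sf EA}-verifiability clause also falls out at once, since the needed implication $\oco_\phi\top \to \neg\,\opr_\phi\apr_\phi\bot$ is just the contrapositive of an {\sf EA}-theorem. So your proposal is not wrong in outline, but it stops short of a proof exactly where the paper's already-established machinery finishes it.
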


\begin{proof}
Suppose $U+ \phi$ is consistent. Then, 
$U+\phi \nvdash \apr_\phi\bot$, by the meta-version of Theorem~\ref{absorption}.
We  apply Theorem~\ref{smurfigesmurf} to obtain $\Pi^0_1$-Flexibility.

Conditional Extensionality follows by Theorem~\ref{extensionalitysmurf}.
\end{proof}


\appendix

\section{Sufficient Conditions for G\"odel's Incompleteness Theorems}\label{donaldsmurf}
We discuss the conditions given in Subsection~\ref{g-smurf}, to wit:
  \begin{enumerate}[I.]
  \item
  $U+\phi$ is a true arithmetical theory.
  \item
  $U+\phi$ is $\omega$-consistent.
  \item
  $U+\phi$ is 1-consistent, in other words, $U+\phi$ satisfies $\Sigma^0_1$-reflection.
  \item
  $U+\phi \nvdash \neg\, \gamma(\gnum\phi)$.
  \end{enumerate}

   As is well known, G\"odel avoided Condition I, since, around the time of his discovery, the notion of
  truth was considered somewhat suspect. He choose $\omega$-consistency. Somehow, this condition had a long life and occurs
  even in recent textbooks. It seems that 1-consistency is a better condition because it allows greater generality.
  Since $\Sigma^0_1$-truth is arithmetically definable it can hardly be suspect. 
  
  But why not go on and
  opt for Condition IV? That gives us a necessary and sufficient condition and, thus, is the only truly
  motivated condition. There are two objections. Firstly, it is trifling, stipulating half of what we wanted to
  prove. Secondly, we need coding and the like to define $\gamma$ at all and so the condition is not sufficiently natural.
  In spite of these objections, I am a fan of Condition IV. It may seem trifling, but it is less so when we
  consider more general statements like the $\Pi^0_1$-flexibility of the G\"odel sentence and of the consistency statement.
  Moreover, it is well-known that we can, modulo {\sf EA}-provable equivalence, eliminate the arbitrary choices
  in the definition of the consistency statement. See e.g.\ \cite{viss:seco11}. Finally, when one considers the
  Incompleteness Theorems for slow provability, Conditions I--III  play no role at all.
  The unprovability of inconsistency follows from the choice of the representation of the axiom set.

\section{Failure of Monotonicity Revisited}\label{gastsmurf}

We revisit Hamkins' argument that independence and monotonicity cannot be combined.
We work over any consistent theory $U$. The axiom set of the theory may have any complexity.
We need not ask anything of the theory, not even that it contains numerals.
Suppose we have a mapping $\phi \mapsto \rho_\phi$ from $U$-sentences to $U$-sentences. There are no assumptions on the
complexity of the mapping or on the complexity of the values and the like. Suppose we have \emph{Independence:} if $U+\phi$ is consistent, then
$\nvdash \phi \to \rho_\phi$ and $\nvdash \phi \to \neg\, \rho_\phi$ and that we have
\emph{Monotonicity:} if $\vdash \phi \to \psi$, then $\vdash \rho_\phi \to \rho_\psi$.

We have $\vdash \neg\, \rho_\top \to \top$. Hence, $\vdash \rho_{\neg \,\rho_\top} \to \rho_\top$.
Ergo,  $\vdash  \neg\, \rho_\top \to \neg\,  \rho_{\neg \,\rho_\top} $. But, then it would follow by Independence that 
$\vdash \rho_\top$. Again by Independence, this is impossible.

Now let $U$ be a base theory as in Section~\ref{prinsbsmurf}.
When we take  $\rho_\phi := \aco_\phi \top$, since we have Independence, we find:
\[\nvdash  \neg\, \aco_\top\top \to \neg\,  \aco_{\neg \,\aco_\top\top}\top.\]
Rewriting this, we obtain: $\nvdash   \apr_\top\bot \to   \apr_{\apr_\top\bot}\bot$.
On the other hand, we do have: $\vdash   \apr_\top\bot \iff   \apr_\top \neg\, \apr_\top\bot$.
So, $\apr_\top \bot \nvdash   \apr_\top \neg\, \apr_\top\bot \to   \apr_{\apr_\top\bot}\bot$. So, we have a counterexample
 to $\phi \wedge \psi\vdash \apr_\phi (\psi \to \chi) \to \apr_{\phi\wedge \psi} \chi$, with $\phi := \top$, $\psi := \apr_\top\bot$ and
 $\chi := \bot$.
 
 In contrast, we have:
 
\begin{theorem}
$U+ (\phi\wedge\psi)\vdash \apr_{\phi\wedge \psi} \chi \to \apr_\phi (\psi \to \chi)$.
\end{theorem}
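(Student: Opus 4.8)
The plan is to work inside $U+(\phi\wedge\psi)$, unwind the hypothesis $\apr_{\phi\wedge\psi}\chi$ to a witnessing bound, and then show that the \emph{same} bound witnesses $\apr_\phi(\psi\to\chi)$. So suppose $\apr_{\phi\wedge\psi}\chi$. By the definition of slow provability, fix $x$ with $\opr_{\phi\wedge\psi,x}\chi$ and $\mf S_{\phi\wedge\psi}(x)$; that is, there is a proof of $\chi$ from $(U+(\phi\wedge\psi))$-axioms $\bleq x$, and $x$ is $(\phi\wedge\psi)$-small. The feature I would exploit is that the subscript $x$ (as opposed to $(x)$) bounds the \emph{axioms} used, not the length of the proof, so any rewriting that introduces no larger axioms preserves the bound.

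First I would carry out the proof transformation. Since codes of subformulas are smaller than the code of the compound formula, $\phi\bleq\phi\wedge\psi\bleq x$. Applying the (elementarily verifiable) deduction theorem to the given proof and discharging the axiom $\phi\wedge\psi$ yields a proof of $(\phi\wedge\psi)\to\chi$ from the $U$-axioms occurring in the original proof, all of which are $\bleq x$. Since $U+\phi$ proves $\psi\to(\phi\wedge\psi)$ directly from the axiom $\phi$, chaining these gives a proof of $\psi\to\chi$ whose $(U+\phi)$-axioms --- the original $U$-axioms together with $\phi$ --- are all $\bleq x$. Hence $\opr_{\phi,x}(\psi\to\chi)$. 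This is a routine, elementarily checkable manipulation of derivations; it never appeals to $\phi\wedge\psi$ as a fact about the world, only to its being an available axiom below $x$.

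It then remains to transfer smallness. Since $U\vdash(\phi\wedge\psi)\to\phi$, Theorem~\ref{oppervlakkigesmurf}(1), with $\phi\wedge\psi$ in the role of the stronger sentence and $\phi$ in the role of the weaker one, gives $U+(\phi\wedge\psi)\vdash\forall x\,(\mf S_{\phi\wedge\psi}(x)\to\mf S_\phi(x))$; in particular our $x$ is $\phi$-small. Combining $\opr_{\phi,x}(\psi\to\chi)$ with $\mf S_\phi(x)$ yields $\apr_\phi(\psi\to\chi)$, which is what we wanted.

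The main obstacle --- which proves mild --- is the bookkeeping of bounds: one must see that the transformed proof, though possibly longer, uses no axiom exceeding $x$, so that reusing the \emph{same} $x$ is legitimate. This rests on the two small but essential points that $\opr_{\cdot,x}$ measures axiom size and that $\phi$ sits below $\phi\wedge\psi$ in the coding. The smallness half is then delivered wholesale by Theorem~\ref{oppervlakkigesmurf}, whose internal case split (standard $x<\num p$ handled via Theorem~\ref{kleinesmurf}, large $x$ via Theorem~\ref{bravesmurf}) already encapsulates exactly the monotonicity of smallness needed here.
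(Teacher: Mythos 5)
Your argument is essentially the paper's own proof: transform the witnessing derivation (discharge $\phi\wedge\psi$ by the deduction theorem, recover it from the axiom $\phi$) and transfer smallness from $\phi\wedge\psi$ to $\phi$ via Theorem~\ref{oppervlakkigesmurf}(1). The one loose step is your claim $\phi\wedge\psi\bleq x$: the definition of $\opr_{\phi\wedge\psi,x}\chi$ only bounds the axioms actually used, so this holds only when the witnessing proof really invokes $\phi\wedge\psi$; in the remaining case the original derivation already yields $\chi$ from $U$-axioms $\bleq x$, and the propositional weakening to $\psi\to\chi$ adds no new axioms, so $\opr_{\phi,x}(\psi\to\chi)$ holds anyway. (The paper avoids this bookkeeping by working with sets of small axioms and implicitly using that $\phi$, being standard, is itself $\phi$-small in the sense of Theorem~\ref{kleinesmurf}.)
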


\begin{proof}
We reason in $U$ plus $\phi\wedge \psi$. Suppose $p$ is a proof of $\chi$ from
$\phi \wedge \psi$ and $(\phi\wedge\psi)$-small axioms. Then,
we can find a proof $q$ of $(\psi\to \chi)$ from $\phi$ plus $(\phi\wedge \psi)$-small axioms.
By Theorem~\ref{oppervlakkigesmurf}, we find that  $(\phi\wedge \psi)$-small axioms are also
$\phi$-small. So, $q$ witnesses $\apr_\phi (\psi \to \chi)$
\end{proof}

\section{Arrow Notation}
It is attractive to represent $\apr_\phi\psi$ as a sort of implication $\phi \tto \psi$.
This is not entirely comfortable since we do not have
the transitivity of implication. Anyway, to see how the alternative notation looks,
we provide the principles we derived before and some new ones ---without any
claim of completeness. 

We will assume that we always have the conjunction of a finite axiomatisation of 
${\sf EA}+\mathrm B\Sigma_1$ as an axiom, whether we have $\phi$ or not, as in 
Remark~\ref{hacksmurf}, so that we have $\Sigma^0_1$-completeness unconditionally
and also that our provability predicate is equivalent to a $\Sigma^0_1$-predicate.

\begin{itemize}
\item
If $\phi \vdash \psi$, then $\phi \vdash \phi \tto \psi$.
\item
$\vdash ((\phi \tto \psi) \wedge (\phi \tto (\psi \to \chi))) \to (\phi \tto \chi)$.
\item
$\phi \tto \psi \vdash \opr(\phi \tto \psi)$.
\item
$\opr \psi \vdash \phi \tto \opr\psi$.
\item
$(\psi \tto \chi) \vdash \phi \tto (\psi \tto \chi)$.
\item
$\ \opr(\phi \to \psi) \vdash \opr(\phi \to (\phi\tto \psi))$.
\item
$ \opr(\phi \to (\phi \tto \psi)) \vdash \opr(\phi \to \psi)$.
\item
$(\phi\wedge \psi),\, ((\phi\wedge \psi) \tto \chi) \vdash \phi \tto (\psi \to \chi)$.
\item
$\phi \tto ((\phi \tto \psi) \to \psi) \vdash  \phi \tto \psi$.
\end{itemize}

\section{On the Proof of Absorption}\label{emisabso}
The fixed point $\nu$ with ${\sf EA} \vdash \nu \iff  (\exists x \, \opr_{\phi,x} \psi ) < \opr_\phi \nu$
 is an instance of the fixed point used in the FGH Theorem, so called after Harvey Friedman, Warren Goldfarb and
 Leo Harrington who each, independently, discovered the argument associated with the fixed point.
 However, in fact, John Shepherdson discovered the argument first. See \cite{shep:rep61}.
 
 The second half of the proof of Theorem~\ref{absorption} is simply a proof of a specific version of the
 FGH theorem. We cannot quite follow the usual argument, since the opposite of $\nu$, to wit
 $  \opr_\phi \nu \leq (\exists x \, \opr_{\phi,x} \psi)$ is not $\Sigma^0_1$. 

The first half of the proof of Theorem~\ref{absorption} brings us from $\opr_\phi \apr_\phi \psi$ to $\opr_\phi \nu$.
The crucial point in the paper is the step where $\phi$-smallness is used is in moving from $\opr_{\phi,(x)}\nu$ to $\nu$.

What about using the fixed point $\apr^\ast_\phi \psi$ with
\[{\sf EA}\vdash \apr^\ast_\phi \psi \iff (\exists x \, \opr_{\phi,x} \psi ) < \opr_\phi\apr^\ast_\phi \bot \;?\]
That would deliver a Fefermanian provability predicate with absorption for the case that $\psi := \bot$.
We can manipulate this by modifying the definition using $\gopr\,$-trickery in order to get closer to extensionality. However, I do not
 see how to get full extensionality.

\end{document}